\newtheorem{theorem}{Theorem}
\newtheorem{lemma}[theorem]{Lemma}
\newtheorem{corollary}[theorem]{Corollary}
\newtheorem{remark}[theorem]{Remark}
\newenvironment{proof}{\begin{trivlist}
    \item[\hskip\labelsep{\bf Proof.}]}{$\hfill\Box$\end{trivlist}}
\newcommand{\N}{\mathbb{N}}
\newcommand{\mask}[1]{}
\DeclareMathOperator{\Prob}{{\bf P}}
\DeclareMathOperator{\Expec}{{\bf E}}
\DeclareMathOperator{\bj}{{\bf j}}
\DeclareMathOperator{\bi}{{\bf i}}
\DeclareMathOperator{\bk}{{\bf k}}
\DeclareMathOperator{\bx}{{\bf x}}
\DeclareMathOperator{\bl}{{ \bf l}}
\DeclareMathOperator{\kl}{K_{low}}
\DeclareMathOperator{\ku}{K_{up}}
\DeclareMathOperator{\supp}{{supp}}
\title{Randomized sparse grid algorithms for multivariate integration on {H}aar-Wavelet spaces}
\author{M. Wnuk, M. Gnewuch}
\date{\today}
\begin{document}
\maketitle

\begin{abstract}
The \emph{deterministic} sparse grid method, also known as Smolyak's method, is a well-established and widely used tool to tackle multivariate approximation problems, and there is a vast literature on it. Much less is known about \emph{randomized} versions of the sparse grid method.
In this paper we analyze randomized sparse grid algorithms, namely randomized sparse grid quadratures  for multivariate integration on the $D$-dimensional unit cube $[0,1)^D$. Let $d,s \in \N$ be such that $D=d\cdot s$. The $s$-dimensional building blocks of the sparse grid quadratures are based on stratified sampling for $s=1$ and on scrambled $(0,m,s)$-nets for $s\ge 2$. The spaces of integrands and the error criterion we consider are Haar wavelet spaces with parameter $\alpha$ and the randomized error (i.e., the worst case root mean square error), respectively. 
We prove sharp (i.e., matching) upper and lower bounds for the convergence rates of the $N$-th mininimal errors for all possible combinations of the parameters $d$ and $s$. 


 Our upper error bounds still hold if we consider as spaces of integrands Sobolev spaces of mixed 
dominated smoothness with smoothness parameters $1/2< \alpha < 1$ instead of Haar wavelet spaces.
\end{abstract}

\section{Introduction}\label{Smolyak2SECT1}

The Smolyak method, also known as sparse grid method, was introduced by Smolyak in \cite{Smo63} and is a widely used method to 
approximate the solution of tensor product problems. Its wide range of applicability is described e.g. in the survey articles \cite{BG04, Gri06},  book chapters \cite[Chapter~15]{NW10}, \cite[Chapter~4]{DTU18}, and the book \cite{DS89}.

In particular, Smolyak algorithms are helpful when it comes to integration problems in moderate or high dimension; references include, e.g., \cite{Gen74, Del90, Tem92, BD93, FH96, NR96, GG98, GG03, Pet03, GLSS07, HHPS18,PW04}.

All references mentioned above study  \emph{deterministic} Smolyak algorithms.
Randomization of algorithms allows for a statistical error analysis and may help to achieve better convergence rates.
Unfortunately, there is much less known about \emph{randomized} Smolyak algorithms. 
The only references we are aware of are \cite{DLP07, HM11} and our previous paper
\cite{GW19}. 

In \cite{DLP07} Dick et al. investigate a specific instance of the randomized Smolyak method
for the integration problem in certain Sobolev spaces over $[0,1)^D$, $D=d\cdot s$.
Firstly,  they consider higher-order digital sequences in $[0,1)^s$ with a fixed digital shift and use them as building block algorithms for a deterministic $D$-dimensional Smolyak construction. They prove that the resulting integration  algorithms achieve almost optimal order of convergence (up to logarithmic factors) of the worst-case error.
Secondly, they randomize their construction by chosing the digital shift randomly and achieve for the resulting randomized Smolyak construction a corresponding  convergence order  of the mean square worst-case error (which is obtained by averaging the square of the worst-case error over all digital shifts and then taking the square root).
In \cite{HM11} Heinrich and Milla employ the randomized Smolyak method as a building block of an algorithm computing antiderivatives of functions from $L^p([0,1]^d)$ allowing for fast computation of antiderivative values for any point in $[0,1]^d.$ Note that in both cases the randomized Smolyak method is applied in specific cases and none of the papers gives a systematic treatment of its properties.

In \cite{GW19} we started a systematic analysis of the randomized Smolyak method. In that paper we studied general linear approximation problems on tensor products of Hilbert spaces.
We considered two error criteria, namely the root mean square worst-case error (as has been done in \cite{DLP07}) and the worst-case root mean square error (or, shortly, \emph{randomized error}),
and provided for both criteria upper and lower error bounds for randomized Smolyak algorithms.
(Note that the latter error criterion is more commonly used to assess the quality of randomized algorithms than the former one.) In our upper error bounds the dependence of the constants on the underlying number of variables as well as on the number of sample points (or, more general, information evaluations) used is made explicit.

In this follow-up article we consider the integration problem over the domain $[0,1)^D$, where $D=d\cdot s$ for some $d,s\in \N$. The spaces of integrands are certain Haar-wavelet spaces, consisting of square integrable functions whose Haar-wavelet coefficients decay fast enough. 
We analyze the performance of randomized Smolyak algorithms that use as building block algorithms equal weight quadratures based on scrambled $(0,m,s)$-nets in some base $b$. The considered error criterion is the randomized error. We pursue mainly two aims: 

Firstly, we want to show that under more specific assumptions on the approximation problem we may improve on the general upper error bounds for randomized Smolyak algorithms provided in \cite{GW19}. (This phenomenon is also known for deterministic Smolyak algorithms, see, e.g., \cite{WW95, GLSS07, NW10}.) ``More specific'' means here that we consider an integration problem (instead of a general linear approximation problem) and a specific reproducing kernel Hilbert space defined via the decay of Haar-wavelet coefficients (instead of a general tensor product Hilbert space). 
We are able to show via a lower error bound (see Theorem~\ref{Thm:LowerBound}) 
that our upper error bound provided in Theorem~\ref{UpperBoundTheorem} is sharp, i.e., that our Smolyak algorithms based on scrambled nets cannot achieve a higher rate of convergence. This means we have determined the exact asymptotic convergence rate of the randomized error of our algorithms.

Secondly, we want to make a precise comparison between randomized quasi-Monte Carlo (RQMC) algorithms based on scrambled $(0,m,s)$-nets (that can be seen as specialized algorithms, tuned for the specific integration problem) on the one hand and Smolyak algorithms based on one-dimensional nets (which can be seen as special instances of a universal method) on the other hand.

This comparison is motivated by the comparison done in \cite{DLP07}. There the authors study Smolyak algorithms for all possible combinations of $d$ and $s$ with $d\cdot s = D$, starting with $s=1, d=D$, i.e., Smolyak algorithms based on one-dimensional nets, up to $s=D, d=1$, i.e., pure higher-order nets in dimension $D$. Their error bounds get larger for larger $d$ by essentially a factor of $(\log N)^{d\cdot  \alpha}$,
where $N$ is the number of sample points employed by the Smolyak method and $\alpha$ is the degree of smoothness of the considered Sobolev space, see \cite[Corollary~4.2]{DLP07}.
Since Dick et al. did not provide matching lower bounds (and we actually believe that their upper bounds are not optimal with respect to the logarithmic order), the question remains whether this really gives a faithful picture of the changes in the convergence order of the error. Moreover, the error criterion used in \cite{DLP07}, the root mean square worst-case error, is a rather untypical error criterion for randomized algorithms. 

That is why we perform a comparison of Smolyak algorithms based on scrambled $(0,m,s)$-nets for all possible combinations of $d$ and $s$ with $d\cdot s = D$ in the Haar-wavelet space setting with respect to the commonly used randomized error criterion. Due to Heinrich et al. \cite{HHY03} the exact convergence order of QMC quadratures based on scrambled $(0,m,s)$-nets is known, and our results determine the exact convergence order of Smolyak algorithms based on scrambled nets. 
We can deduce that the convergence order of the randomized error decreases for increasing $d$ exactly by a factor $(\log N)^{(d-1)(1+\alpha)}$, where again 
$N$ is the number of sample points employed by the algorithm at hand and $\alpha > \frac{1}{2}$ is the ``degree of smoothness'' of the considered Haar-wavelet space.
This factor describes the loss in the convergence order we have to accept in this setting if we use for integration a universal tool as Smolyak's method 
instead of the more specialized, optimal tool, namely scrambled $(0,m,D)$-nets.

The paper is organized as follows: At the beginning of Section \ref{Smolyak2SECT2} we give a general overview of the Smolyak method discussed in this article. Then in Subsection \ref{Nets} we recall the properties of $(0,m,s)$-nets and explain what kind of randomization we are considering. Subsection \ref{FunctionSpaces} gives a short account of Haar-wavelets spaces, which constitute the input space of the algorithms considered. Subsection \ref{UB} contains the main result of this article, an upper bound on the integration error of the Smolyak algorithm on Haar wavelet spaces, see Theorem \ref{UpperBoundTheorem}. As explained in Corollary \ref{Cor:Error_Sobolev}, for $1/2 < \alpha < 1$ the same upper bound holds also for the integration error of the algorithm on Sobolev spaces of dominated mixed smoothness $\alpha$. Finally, in Subsection \ref{LB} we show that the previously established upper bound is optimal by providing a matching lower bound (Theorem \ref{Thm:LowerBound}). Theorem \ref{UpperBoundTheorem} and Theorem \ref{Thm:LowerBound} allow now for an easy comparison between the different variants of the randomized Smolyak algorithm that correspond to different values of $s$ and $d.$

We finish the introduction with a few remarks on notation. If not explicitly stated otherwise all additions and comparisons involving vectors and real numbers are performed coordinatewise, so e.g. for $\bk = (k_1    ,\ldots, k_d) \in \mathbb{N}^d , \bj = (j_1    ,\ldots, j_d) \in \mathbb{N}^d$ the formula $\bk = \bj -1$ is to be read as $k_n = j_n - 1, n = 1,\ldots, d.$

In the sequel $b \in \mathbb{N}_{\geq 2}$ stands for the base of the considered $(0,m,s)$-nets and is fixed. For $j \in \mathbb{N}_0$ put $\theta_j = \{0\}$ if $j = 0$ and $\theta_j = \{0,\ldots,b-1\}$ else. For $\bj \in \mathbb{N}^t_0$ set
$$\theta_{\bj} = \theta_{j_1} \times \cdots \times \theta_{j_t}.$$
Similarly put $\vartheta_j = \{0\}$ for $j \in \{-1,0\}$ and $\vartheta_j = \{0,\ldots, b^j - 1\}$ for $j \in \mathbb{N}.$ For $\bj \in \mathbb{Z}_{\geq -1}^{t}$ set
$$\vartheta_{\bj} = \vartheta_{j_1} \times \cdots \times \vartheta_{j_t}.$$

Finally, for a vector ${\bf{v}} = (v_1,\ldots, v_t)$ we write $|{\bf{v}}| := \sum_{j = 1}^t | v_j|.$

\section{Multivariate Integration on Haar-wavelet spaces: Smolyak vs. $(0,m,s)$-Nets}\label{Smolyak2SECT2}
We are considering the problem of integrating $D-$variate functions belonging to Haar-wavelet spaces with smoothness parameter $\alpha > \frac{1}{2}.$  The solution operator is the integration operator $I_{D},$ which is given by
$$I_{D}: \mathcal{H}^{D}_{ \alpha} \rightarrow \mathbb{R}, \quad I_{D}f = \int_{[0,1)^{D}}f(\bx) \, d\bx.$$
The $D-$variate Haar-wavelet space $\mathcal{H}^{D}_{\alpha}$ with smoothness parameter $\alpha$ consists of all square-integrable functions whose wavelet coefficients converge to $0$ quickly enough and  will be defined later in Section \ref{FunctionSpaces}.
We would like to use the randomized Smolyak method with building blocks being scrambled $(0,m,s)-$nets, where $D = ds$ for some $d \in \mathbb{N}.$
The building blocks of our Smolyak method $((U_l^{(n)})_{l \in \mathbb{N}})_{n \in [d]}$ are randomized QMC quadratures such that for every $n,$ $U^{(n)}_l$ is based on a scrambled $(0,l-1,s)$- net in base $b$, i.e. for some scrambled $(0,l-1,s)-$net $(Y_i)_{i = 1}^{b^{l-1}}$ and any function $f$  we have
$$U^{(n)}_{l} f = b^{-l+1} \sum_{i = 1}^{b^{l-1}} f(Y_i).$$
For the definition of $(0,m,s)-$nets in base $b$ see Section \ref{Nets}. 
Moreover, set $U^{(n)}_0, n = 1,\ldots, d,$ to be the algorithm returning a.s. $0$ for every input.
We make the standing assumption that all the building blocks $U^{(n)}_l, n = 1,\ldots, d, l \in \mathbb{N},$ are randomized independently.
Let $L,d \in \mathbb{N}$ with $L \geq d.$ The randomized $d-$variate Smolyak method of level $L$ is constructed as in \cite{GW19},
i.e. denoting
\begin{equation}
  Q(L,d) := \left\{\bl \in \mathbb{N}^d \, | \, | \bl | \leq L\right\},
\end{equation}
and
\begin{equation}\label{DeltaDef}
   \Delta_l^{(n)} := U_l^{(n)} - U_{l-1}^{(n)}, \quad  l \in \mathbb{N}, n = 1,\ldots, d,
\end{equation}
we put
\begin{equation} \label{SmolyakAlg}
A(L,d) = \sum_{\bl \in Q(L,d)} \bigotimes_{n = 1}^d \Delta_{l_n}^{(n)}.
\end{equation}
The randomized Somolyak method lives on the probability space $(\Omega, \Sigma, \Prob),$ where $\Omega = \Omega^{(1)} \times \ldots \Omega^{(d)},$ $\Sigma = \bigotimes_{n = 1}^d \Sigma^{(n)}$ and $\Prob = \bigotimes_{n = 1}^d \Prob^{(n)},$ and $(\Omega^{(n)}, \Sigma^{(n)}, \Prob^{(n)})$ is the probability space on which the sequence $(U^{(n)}_l)_l$ lives. The product structure reflects the fact that the quadratures from the different coordinates of the Smolyak algorithm are randomized independently, cf. \cite{GW19}. 
It can be verified  that the following representation
\begin{equation}\label{alg_bb_rep}
A(L,d) = \sum_{ L-d+1 \leq |\bl| \leq L} (-1)^{L-|\bl|} \binom{d-1}{L-|\bl|} \bigotimes_{n = 1}^d U^{(n)}_{l_n},
\end{equation}
is valid,
cf. \cite[Lemma 1]{WW95}.
Our focus lies on the \emph{randomized (root mean square) error} $e^{{\rm{r}}}(I_D,A(L,d))$ of  the described Smolyak method, which is given by
$$e^{{\rm{r}}}(I_D,A(L,d)) = \sup_{\lVert f \rVert_{\mathcal{H}^{D}_{\alpha}} \leq 1} \left(   \Expec \left[ |A(L,d)f - I_Df |^2  \right] \right)^{\frac{1}{2}}.$$
We denote by $N$ the number of function evaluations performed by the Smolyak method. Our aim is to show that there exist constants $c,C > 0$ independent of $N$ such that
$$  c \frac{(\log(N))^{(d-1)(1+\alpha)}}{N^{\alpha + \frac{1}{2}}} \leq  e^{{\rm{r}}}(I_D,A(L,d)) \leq C \frac{(\log(N))^{(d-1)(1+\alpha)}}{N^{\alpha + \frac{1}{2}}}. $$
 Both bounds, the upper and the lower one, are obviously sharp and improve on the bounds which we could obtain by direct application of 
 \cite[Theorem 3]{GW19} by logarithmic factors (note however, that the bounds from \cite{GW19} are much more general since they can be applied to any linear approximation problem in a Hilbert space setting). Changing the parameter $d$ from $1$ to $D$ through the divisors of $D$ yields a whole family of algorithms, ranging from quadratures based purely on scrambled $(0,m,D)$-nets to a randomized Smolyak method with building blocks stemming from one-dimensional nets (i.e. one-dimensional stratified sampling).

The following lemma holds actually for general randomized Smolyak quadratures, and the first statement is valid also for the deterministic ones.

\begin{lemma}\label{WeightsSum}
Let $d, L \in \mathbb{N}, L \geq d,$ and $(U^{(n)}_l)_{l \in \mathbb{N}}, n = 1,\ldots, d,$ be sequences of quadratures on $[0,1)^s$ such that weights used by every $U^{(n)}_l$ sum up to $1.$
Then
\begin{enumerate}
\item The weights used by the Smolyak method $A(L,d)$ based on the building blocks $(U_l^{(n)})_l, n = 1,\ldots, d,$ also sum up to $1,$ i.e. for any $f :[0,1)^{ds} \rightarrow \mathbb{R}$ we have
$$A(L,d)f = \sum_{\nu = 1}^N w_{\nu} f(x_{\nu}), $$
for some sampling points $x_{\nu} \in [0,1)^{ds},$ and weights $w_{\nu} \in \mathbb{R}$ with $\sum_{\nu = 1}^N w_{\nu} = 1,$
\item If additionally $f \in L^{1}([0,1)^{ds}) $ and all the sampling points employed by $(U_l^{(n)})_{l,n}$ are uniformly distributed in $[0,1)^s$  then $A(L,d)f$ is an unbiased estimator of the integral of $f.$
\end{enumerate}  
\end{lemma}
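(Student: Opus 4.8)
The plan is to prove both statements by exploiting the representation~\eqref{alg_bb_rep} of $A(L,d)$ as a signed linear combination of tensor products of the building blocks $U^{(n)}_{l_n}$, and then reducing everything to the hypothesis that each $U^{(n)}_l$ uses weights summing to~$1$ (together with, for the second part, the fact that each $U^{(n)}_l$ is an unbiased estimator for uniformly distributed sampling points).

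\medskip

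\textbf{Part (1).} First I would observe that a tensor product $\bigotimes_{n=1}^d U^{(n)}_{l_n}$ of quadratures, each with weights summing to~$1$, is again a quadrature on $[0,1)^{ds}$ whose weights sum to~$1$: if $U^{(n)}_{l_n} f = \sum_{i} w^{(n)}_i f(x^{(n)}_i)$ with $\sum_i w^{(n)}_i = 1$, then the product rule evaluates a $ds$-variate $f$ at the grid points $(x^{(1)}_{i_1}, \ldots, x^{(d)}_{i_d})$ with weights $\prod_{n=1}^d w^{(n)}_{i_n}$, and $\sum_{i_1,\ldots,i_d} \prod_{n=1}^d w^{(n)}_{i_n} = \prod_{n=1}^d \big(\sum_{i_n} w^{(n)}_{i_n}\big) = 1$. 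Hence each term in~\eqref{alg_bb_rep} is a quadrature with weights summing to~$1$, so $A(L,d)$ is a finite linear combination $A(L,d) = \sum_{L-d+1 \le |\bl| \le L} (-1)^{L-|\bl|}\binom{d-1}{L-|\bl|}\, V_{\bl}$ where each $V_{\bl}$ has weights summing to~$1$. Collecting all the sampling points appearing across all the $V_{\bl}$ into a single list $(x_\nu)_{\nu=1}^N$ and adding the corresponding (signed, and scaled by the binomial coefficients) weights, we get $A(L,d)f = \sum_{\nu=1}^N w_\nu f(x_\nu)$ with
$$\sum_{\nu=1}^N w_\nu = \sum_{L-d+1 \le |\bl| \le L} (-1)^{L-|\bl|}\binom{d-1}{L-|\bl|}\cdot 1 = \sum_{k=0}^{d-1} (-1)^{k}\binom{d-1}{k} = (1-1)^{d-1} = 0^{d-1}.$$
Since $L \ge d$ forces $d-1 \ge 0$, and for $d = 1$ the sum is the single term $\binom{0}{0} = 1$ while for $d \ge 2$ it is $0^{d-1} = 0$ — wait, that gives $0$, not $1$. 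This is the point I need to be careful about: the clean way is instead to use the \emph{defining} form~\eqref{SmolyakAlg}, $A(L,d) = \sum_{\bl \in Q(L,d)} \bigotimes_{n=1}^d \Delta^{(n)}_{l_n}$, and note that $\Delta^{(n)}_l = U^{(n)}_l - U^{(n)}_{l-1}$ has weights summing to $1-1 = 0$ for $l \ge 2$ and summing to $1-0 = 1$ for $l = 1$ (since $U^{(n)}_0 \equiv 0$). A tensor product $\bigotimes_n \Delta^{(n)}_{l_n}$ thus has weight-sum $\prod_n (\text{weight-sum of }\Delta^{(n)}_{l_n})$, which is $1$ exactly when $l_n = 1$ for all $n$ (i.e. $\bl = \bone$, which lies in $Q(L,d)$ since $|\bone| = d \le L$) and $0$ otherwise. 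Summing over $\bl \in Q(L,d)$ gives total weight-sum $1$. This is the honest argument; the binomial identity above is simply the wrong bookkeeping because the $U^{(n)}_{l_n}$ in~\eqref{alg_bb_rep} range only over $L-d+1 \le |\bl|$, not over all of $Q(L,d)$, so the telescoping that produces the $\bl = \bone$ term is hidden.

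\medskip

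\textbf{Part (2).} For the unbiasedness claim, since each sampling point $x^{(n)}_i$ employed by $U^{(n)}_l$ (for $l \ge 1$) is uniformly distributed on $[0,1)^s$ and $f \in L^1$, we have $\Expec[U^{(n)}_l f] = \sum_i w^{(n)}_i \Expec[f(x^{(n)}_i)] = \big(\sum_i w^{(n)}_i\big) \int_{[0,1)^s} f = \int_{[0,1)^s} f$ by Fubini/Tonelli (justified by $f \in L^1$), so each $U^{(n)}_l$ is unbiased for $l \ge 1$, and trivially $\Expec[U^{(n)}_0 f] = 0$. Hence $\Expec[\Delta^{(n)}_l f] = \int f$ for $l \ge 2$ and $\Expec[\Delta^{(n)}_1 f] = \int f - 0 = \int f$; in fact $\Expec[\Delta^{(n)}_l f] = \int_{[0,1)^s} f$ for all $l \ge 1$. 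Then, using the independence of the building blocks across coordinates $n = 1,\ldots,d$ (which is the standing assumption, reflected in the product probability space), the expectation of a tensor product factorizes: for $g = \bigotimes_{n=1}^d g_n$ we would get $\Expec\big[\bigotimes_n \Delta^{(n)}_{l_n} g\big] = \prod_n \Expec[\Delta^{(n)}_{l_n} g_n] = \prod_n \int_{[0,1)^s} g_n = \int_{[0,1)^{ds}} g$. For a general $f \in L^1([0,1)^{ds})$ one argues the same way directly (the tensor-product quadrature $\bigotimes_n \Delta^{(n)}_{l_n}$ applied to $f$, then expectation, then integrating out coordinate by coordinate using uniformity and independence, gives $\sum_{\bl}(\prod_n c_{l_n}) \int_{[0,1)^{ds}} f$ where $c_l$ is the weight-sum of $\Delta^{(n)}_l$); linearity of expectation and the weight-sum computation from Part~(1) give
$$\Expec[A(L,d)f] = \sum_{\bl \in Q(L,d)} \Expec\Big[\bigotimes_{n=1}^d \Delta^{(n)}_{l_n} f\Big] = \Big(\sum_{\bl \in Q(L,d)} \prod_{n=1}^d c_{l_n}\Big) \int_{[0,1)^{ds}} f = \int_{[0,1)^{ds}} f = I_D f,$$
since only $\bl = \bone$ contributes. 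Alternatively one invokes~\eqref{alg_bb_rep} and the Vandermonde-type identity $\sum_{j=0}^{d-1}(-1)^j\binom{d-1}{j}\binom{?}{?}=\cdots$; but the $\Delta$-form is cleaner.

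\medskip

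The main obstacle — such as it is in an essentially bookkeeping lemma — is getting the combinatorics right and not falling into the trap I flagged above: the representation~\eqref{alg_bb_rep} in terms of the $U^{(n)}_{l_n}$ does \emph{not} manifestly display total weight $1$ (a naive sum of binomial coefficients gives $0^{d-1}$), because the ``missing'' combinatorial mass is exactly the telescoped term. So the safe route is to carry out both parts using the defining difference form~\eqref{SmolyakAlg}, where the weight-sum of each $\bigotimes_n \Delta^{(n)}_{l_n}$ is the product of the one-dimensional weight-sums and is nonzero only for $\bl = \bone$. The only genuine analytic input is Fubini's theorem to pass expectations through the (finitely many) point evaluations for $f \in L^1$, and independence across coordinates to factorize the expectation of the tensor products; everything else is the telescoping identity $\sum_{\bl \in Q(L,d)} \prod_{n=1}^d c_{l_n} = 1$ with $c_1 = 1$, $c_l = 0$ for $l \ge 2$.
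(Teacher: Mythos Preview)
Your final argument for Part~(1) is correct and is exactly the paper's proof: work with the defining representation~\eqref{SmolyakAlg}, observe that the weight-sum of $\Delta^{(n)}_l$ is $0$ for $l\ge 2$ and $1$ for $l=1$, so only $\bl=\bone$ contributes. Your treatment of Part~(2) is more detailed than the paper's (which simply says it follows immediately from Part~(1)), but it is correct and compatible.

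One minor remark on your self-correction: the reason your first computation via~\eqref{alg_bb_rep} gave $0^{d-1}$ is not that ``the $\bl=\bone$ term is hidden'' but that you silently collapsed the sum over $\bl$ to a sum over $k=L-|\bl|$, dropping the multiplicity $\binom{L-k-1}{d-1}$ of multi-indices $\bl$ with $|\bl|=L-k$. With that multiplicity restored the sum does equal~$1$ (the paper records this as a separate remark), so~\eqref{alg_bb_rep} is a perfectly valid route as well; your $\Delta$-argument is simply the cleaner one, and it is the one the paper uses.
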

\begin{proof}
The second statement follows immediately from the first one. We prove the first statement.
 For arbitrary quadratures $Q^{(1)}, \ldots, Q^{(d)},$ such that
$$Q^{(n)}f = \sum_{j = 1}^{N_n} w^{(n)}_j f(x_j^{(n)}), \hspace{2ex}, n = 1, \ldots, d, $$
denoting $w^{(n)} := \sum_{j = 1}^{N_n} w_j^{(n)},$ we have that the sum of the weights used by the quadrature $Q := \bigotimes_{n = 1}^d Q^{(n)}$ is equal to $\prod_{n = 1}^d w^{(n)}.$ Now, the sum of the weights used by $\Delta^{(n)}_{l_n}$ (see (\ref{DeltaDef})) is equal to $0$ if $l_n > 1$ and is equal to $1$ if $l_n = 1.$ It follows from (\ref{SmolyakAlg}) that the sum of weights used by $A(L,d)$ is equal to the sum of weights used by $\bigotimes_{n = 1}^d \Delta^{(n)}_1,$ i.e. to $1.$
\end{proof}

\begin{remark}
\begin{enumerate}
\item One may easily find examples where weights applied by the Smolyak method are negative, so in particular $A(L,d)$ is not a (randomized) QMC quadrature.
\item Note that utilizing representation (\ref{alg_bb_rep}) one may also find an alternative formula for the sum of weights of our Smolyak algorithm, namely
$$\sum_{ \nu = 0}^{\min\{d-1, L-d \}} (-1)^{\nu} \binom{d-1}{\nu} \binom{L-\nu - 1}{d-1}.$$
It follows that the above sum is equal to $1.$
\end{enumerate}
\end{remark}

\begin{figure}
    \centering
    \begin{minipage}{0.45\textwidth}
        \centering
        \includegraphics[width=0.9\textwidth]{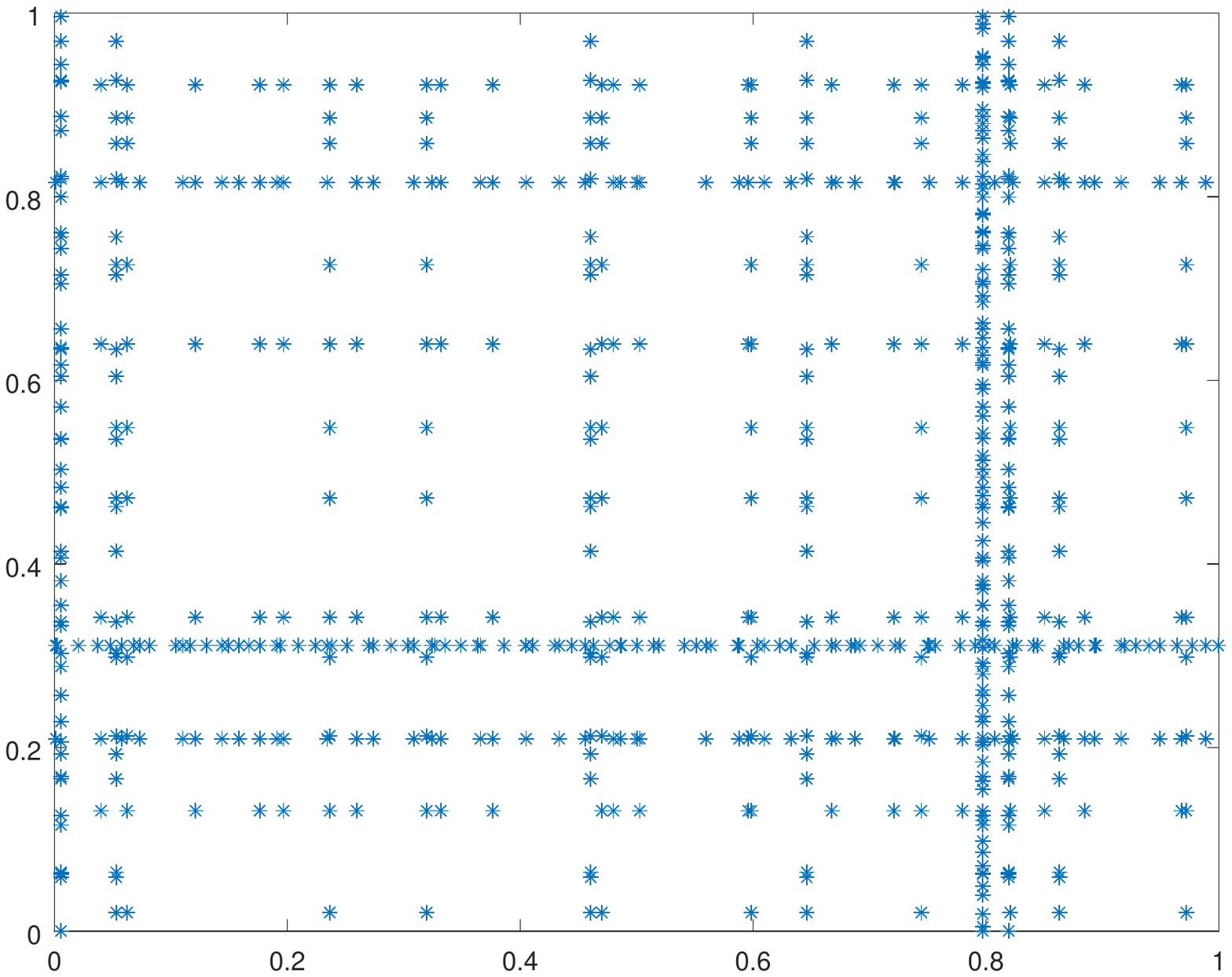} 
        \caption{Smolyak integration nodes in case $s = 1, d = 2.$ $N = 640$. Note that the weights used are not equal.}
    \end{minipage}\hfill
    \begin{minipage}{0.45\textwidth}
        \centering
        \includegraphics[width=0.9\textwidth]{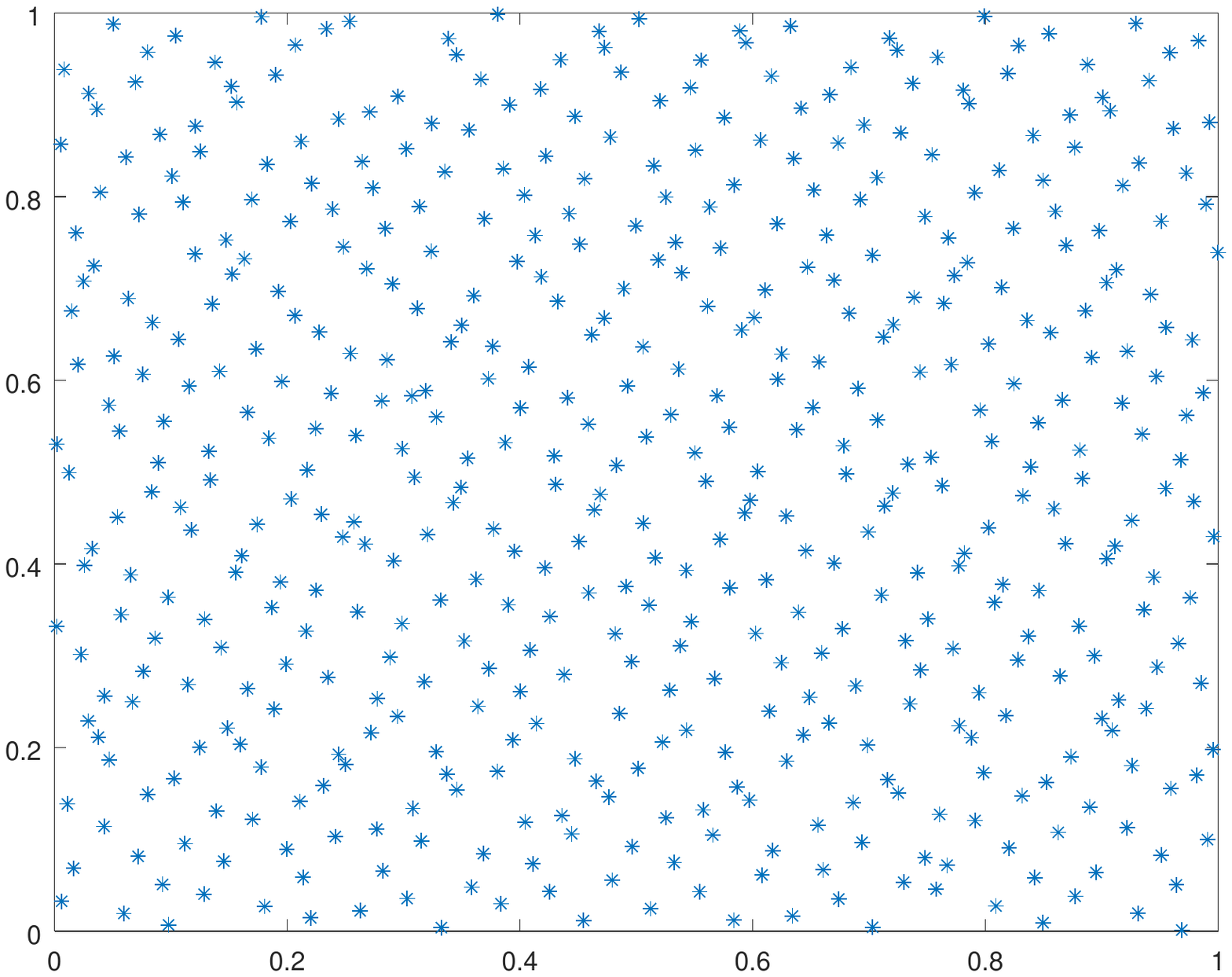} 
        \caption{Smolyak integration nodes in case $s=2, d=1,$ scrambled Sobol points. $N = 512$.}
    \end{minipage}
\end{figure}

\subsection{Scrambled ($0,m,s)-$Nets} \label{Nets}
Fix $b \in \mathbb{N}_{\geq 2}.$ For $j \in \mathbb{N}_0$ and $k \in \vartheta_j$ we define the one-dimensional \emph{elementary interval (in base $b$)} $E^j_k$ to be
$$E^j_k := [kb^{-j},(k+1)b^{-j}).$$
Moreover, we put
$$E^{-1}_k  := [0,1).$$
Let $s \in \mathbb{N}.$   An $s-$dimensional interval is called \emph{elementary} if for some $\bj \in \mathbb{N}^s_0$ and $\bk \in \vartheta_{\bj}$ it is of the form
$$E^{\bj}_{\bk} := E^{j_1}_{k_1} \times \cdots \times E^{j_s}_{k_s}.$$

Let $m \in \mathbb{N}.$ A finite set $\mathcal{P} \subset [0,1)^s$ is called a $(0,m,s)-$\emph{net} in base $b$ if each elementary $s-$dimensional interval of volume $b^{-m}$ contains exactly one point of $\mathcal{P}.$ The notion  of $(0,m,s)-$nets was introduced by Niederreiter in \cite{Nie87}. Nets may be build using e.g. the Faure construction \cite{Fau81, Fau82}. 

The most common way of obtaining randomized nets is via scramblings, which was introduced by Owen, see \cite{Owe95, Owe97}. Here we present a slight modification of the original idea, see \cite[Section 2.4.]{Mat10}.

A mapping $\sigma: [0,1) \rightarrow [0,1)$ is called a \emph{$b-$ary \emph{scrambling} (of depth $l$)} if it is constructed in the following way. Let $x =_{b} 0.x_1x_2\ldots x_m x_{m+1}\ldots$ be a $b-$ary representation of $x$ (whenever possible we choose the representation having finitely many digits different from $0$). To determine $\sigma(x)$ we first fix some permutation $\pi$ of $\{0,1,\ldots, b-1\}$ and let the first $b-$ary digit of $\sigma(x)$ be $y_1 = \pi(x_1).$ Next for every possible value of  $x_1 $ we fix a permutation $\pi_{x_1}$ of $\{0,1,\ldots, b-1\}$ and define the second $b-$ary digit of $\sigma(x)$ to be $y_2 = \pi_{x_1}(x_2).$ Continuing analogously, for every $x_1, x_2$ we choose a permutation $\pi_{x_1, x_2}$ and let the third digit of $\sigma(x)$ be $y_3 = \pi_{x_1, x_2}(x_3).$ In this way we get the first $l$ digits of $\sigma(x).$ There are many approaches concerning how to choose the reminder term. We fix all the other digits of $\sigma(x)$ to be equal $0.$ We speak of a random scrambling when all the permutations are chosen independently from the uniform distribution on permutations of $\{0,1,\ldots, b-1\}.$
Let now $\mathcal{P} \subset [0,1)^s$ be a $(0,m,s)-$ net in base $b$ and let $\boldsymbol{\sigma} :=(\sigma_1,\ldots, \sigma_s)$ be a vector of independent random scramblings of depth $l \geq m.$ We define the set
$$\boldsymbol{\sigma}(\mathcal{P}) := \{ (\sigma_1(p_{i,1}) + b^{-l}\xi_{i}^{1}, \ldots , \sigma_s(p_{i,s}) + b^{-l}\xi_{i}^{s}) \, | \, (p_{i,1},\ldots, p_{i,s}) =: p_i \in \mathcal{P}  \}, $$
where the family of reminder terms $(\xi_{i}^{t}), i \in \mathbb{N}, t \in [s],$ consists of iid random variables distributed according to Unif([0,1)) and independent of all the permutations.
 If $\mathcal{P}$ is a $(0,m,s)-$net the same holds true for $\boldsymbol{\sigma}(\mathcal{P}).$ Moreover, every single $p \in \boldsymbol{\sigma}(\mathcal{P})$ is distributed according to Unif$([0,1)^s)$. We call the set $\boldsymbol{\sigma}(\mathcal{P})$ a \emph{scrambled net}.



For every $l \in \mathbb{N}$ let  $\boldsymbol{\sigma}^{(n)}_{l} := (\sigma^{(n)}_{l,t})_{t \in [s]}$ be a vector of independent scramblings of depth $l-1.$ We require all the permutations used by $((\sigma^{(n)}_{l,t})_{t \in [s]})_{n \in [d], l \in \mathbb{N}}$ and all the reminder terms $((\xi^{(n),t}_{l,i})_{t \in [s], i \in [b^{j-1}]})_{n \in [d], l \in \mathbb{N}}$ to be independent.
We let $U_l^{(n)}$ be a QMC quadrature based on $\boldsymbol{\sigma}^{(n)}_{l}(\mathcal{P}^{(n)}_l)$ for some $(0,l-1,s)$-net $\mathcal{P}^{(n)}_l.$ We say in this case that all the building blocks $U^{(n)}_l, n = 1,\ldots,d, l \in \mathbb{N},$ are scrambled independently.  

\subsection{Function Spaces}\label{FunctionSpaces}

Now we define Haar-wavelet spaces $\mathcal{H}_{\alpha}^{D}$. We start with univariate wavelets. Let $b \in \mathbb{N}_{ \geq 2}$ (it will later coincide with the base of a $(0,m,s)-$net) and define
$$\psi_{i}(x) = b^{\frac{1}{2}} \mathbbm{1}_{\lfloor bx \rfloor = i } - b^{-\frac{1}{2}}\mathbbm{1}_{\lfloor x \rfloor = 0 }, \quad i = 0,1,\ldots, b-1,$$
here $\lfloor x \rfloor$ denotes the largest integer smaller or equal then $x$ and for a statement $S$ the characteristic function $\mathbbm{1}_S$ takes on value $1$ if $S$ is true and $0$ otherwise.
For integers $j > 0$ and $k \in \vartheta_{j-1}$ put
$$\psi^j_{i,k}(x) := b^{\frac{j-1}{2}} \psi_i(b^{j-1}x - k) = b^{\frac{j-2}{2}}[b \mathbbm{1}_{\lfloor b^{j}x \rfloor = bk + i } - \mathbbm{1}_{\lfloor b^{j-1} x \rfloor = k } ].$$
Note that $\psi_i = \psi^{1}_{i,0}.$ Moreover, define
$$\psi^{0}_{0,0} := \mathbbm{1}_{[0,1)}. $$
We shall refer to those functions as \emph{Haar wavelets}.
The parameters have the following interpretation: $j$ describes the resolution of the wavelet, which corresponds to the size of its support, $i$ describes the shape and $k$ is the shift parameter. We have $\supp(\psi^{j}_{i,k}) = E^{j-1}_{k}.$
The wavelets are normalized with respect to the $L^2$-norm, i.e.
$$\Vert \psi^{j}_{i,k} \rVert_{L^2} = 1.$$
For an integer $j \geq 0$ let us put
$$V_j := \{f \in L^2([0,1)) \, | \, f \text{ is constant on intervals } E^j_k, k \in \vartheta_{j}\}. $$
Moreover, we define $W_0 = V_0$ and $W_{j+1}$ to be the orthogonal complement of $V_j$ in $V_{j+1}$, i.e. we have
$$V_{j+1} = V_j \oplus W_{j+1} = \bigoplus_{\nu = 0}^{j+1} W_{\nu} .$$
One may easily see that
$$W_j = {\rm{span}}\{\psi^j_{i,k} \, | \, i \in \theta_j, k \in \vartheta_{j-1}\} $$
and $\bigoplus_{j = 0}^{\infty} W_j$ is dense in $L^2([0,1))$.

Now we proceed to multivariate wavelets which are nothing more then tensor products of univariate wavelets.
Given vectors $\bj \in \mathbb{N}^{D}_0, \bi \in \theta_{\bj}, \bk \in \vartheta_{\bj - 1}$ we write
$$ \Psi^{D, \bj}_{\bi, \bk}(\bx) := \prod_{t=1}^{D} \psi^{j_t}_{i_t, k_t}(x_t), \quad \bx = (x_1,\ldots, x_D) \in [0,1)^{D}.$$ 
For $r \in \mathbb{N}, L \in \mathbb{N}_0$ let
$$J(r, L) := \left\{ \bj \in \mathbb{N}^{r}_0 \, | \, |\bj| = L \right\}.$$
We define the $s$-variate approximation space of level $L$ to be
$$V^{s, L} := \sum_{\bj \in J(s, L)} \bigotimes_{t = 1}^{s} V_{j_t}.$$
Moreover, let
$$W^{s, L} := \bigoplus_{\bj \in J(s,L)} \bigotimes_{t = 1}^{s} W_{j_t}. $$
Note that the space $W^{s, L}$ is represented as an orthogonal sum whereas $V^{s, L}$ is not.
It is easily verified that 
$$W^{s,L} = {\rm{span}}\{\Psi^{s, \bj}_{\bi, \bk} \, | \, \bj \ \in J(s,L), \bi \in \theta_{\bj}, \bk \in \vartheta_{\bj - 1} \},$$
$V^{s,L}$ consists of linear combinations of functions constant on all the $s$-dimensional elementary intervals in base $b$ of volume $b^{-L},$
and 
$$V^{s,L+1} = V^{s, L} \oplus W^{s, L+1}.$$
Finally, we put
$$V^{D,L} := \sum_{\bj \in J(d,L)} \bigotimes_{n = 1}^d V^{s, j_n},$$
and 
$$W^{D,L} := \bigoplus_{\bj \in J(d,L)} \bigotimes_{n = 1}^{d} W^{s, j_n}.$$


Note that the wavelets $(\Psi^{D, \bj}_{\bi, \bk})_{\bj, \bi, \bk}$  are not orthogonal but they do integrate to $0$ as long as at least one coordinate of $\bj$ is positive.
We may represent every function $f \in L^2([0,1)^D)$ as

\begin{equation}\label{WavRep}
f  = \sum\limits_{\bj, \bk, \bi} \hat{f}^{\bj}_{\bi, \bk}\Psi^{D, \bj}_{\bi, \bk} :=  \sum\limits_{\bj}\sum\limits_{\bk}\sum\limits_{\bi} \hat{f}^{\bj}_{\bi, \bk}\Psi^{D, \bj}_{\bi, \bk}
\end{equation}
with Haar coefficients
$$\hat{f}^{\bj}_{\bi, \bk}  = \int_{[0,1)^{D}} f(\bx) \Psi^{D, \bj}_{\bi, \bk}(\bx) \, d\bx$$
where the convergence holds in $L^2$ sense (for details, see \cite{Owe97}). Functions $(\Psi^{D, \bj}_{\bi, \bk})_{\bj, \bk,\bi}$  form a tight frame in $L^2([0,1)^{D}),$ meaning that there exists a constant $c$ such that for every $f \in L^2([0,1)^{D})$ we have $\sum\limits_{\bj, \bi, \bk} |\langle f, \Psi^{D, \bj}_{\bi, \bk}\rangle_{L^2}|^2 = c \lVert f \rVert^2_{L^2}$ (in this special case we even have $c=1$).

For $\alpha > \frac{1}{2}$ we consider the following \emph{(D-variate) Haar-wavelet space (with smoothness $\alpha$)}
$$\mathcal{H}_{\alpha}^{D} := \left\{f \in L^2([0,1)^D) \, \bigg| \, \sum\limits_{\bj, \bk, \bi} b^{2\alpha|\bj|} (\hat{f}^{\bj}_{\bi, \bk})^2 < \infty \right\}.$$
We equip $\mathcal{H}^{D}_{\alpha}$ with the inner product
$$\langle f,g  \rangle_{\mathcal{H}_{\alpha}^{D}}  :=  \sum\limits_{\bj, \bk, \bi} b^{2\alpha|\bj|} \hat{f}^{\bj}_{\bi, \bk} \hat{g}^{\bj}_{\bi, \bk}.$$

\begin{remark}\label{Rem:Sobolev}
Let $1/2 < \alpha < 1$. It is known that the Sobolev space $H^{\alpha} [0,1] \subseteq L^2([0,1))$ is continuously embedded into $\mathcal{H}_{\alpha}^1$, see, e.g., \cite{GLSS07} for the case where $b=2$ or \cite{DGMS} for a more general statement. Since the Sobolev space of dominated mixed smoothness $H^{\alpha}_{\rm mix} ( [0,1]^D)$ and  the Haar wavelet space $\mathcal{H}_{\alpha}^D $ are the $D$-fold tensor product Hilbert spaces of $H^{\alpha} [0,1]$ and $\mathcal{H}_{\alpha}^1$, respectively, also $H^{\alpha}_{\rm mix} ( [0,1]^D)$ is continuously embedded in $\mathcal{H}_{\alpha}^D$.
\end{remark}

Scaled wavelets and scaled coefficients are denoted by $$\Psi^{D, \bj, \alpha}_{\bi, \bk} := b^{-\alpha|\bj| } \Psi^{D, \bj}_{\bi, \bk}$$ and $$\hat{f}^{\bj, \alpha}_{\bi, \bk} := b^{ \alpha |\bj|} \hat{f}^{\bj}_{\bi, \bk}.$$
Note that the scaled wavelets $\Psi^{D, \bj, \alpha}_{\bi, \bk}$ are normalized with respect to the $\mathcal{H}^{D}_{\alpha}$ norm.
We want to represent a function $f \in \mathcal{H}_{\alpha}^{D}$ as a vector of wavelet coefficients.
For $n \in \mathbb{N}_0$ put
$$J(n) := \{(\bj, \bk, \bi) \in \mathbb{N}_{0}^{D} \times \mathbb{N}_{0}^{D} \times \mathbb{N}_{0}^{D} \, | \, |\bj| = n, \bk \in \vartheta_{\bj - 1}, \bi \in \theta_{\bj}\}.$$
Let $\hat{f}^{n}$ be a vector consisting of the wavelet coefficients of $f$ pertaining to indices from $J(n)$ ordered in some fixed way. Put
$$\hat{f} :=  (\hat{f}^n)_{n \in \mathbb{N}_0}. $$
In a similar fashion we define $\hat{f^{\alpha}}.$ Ordering the functions  $\Psi^{D,\bj}_{\bi,\bk}$ in exactly the same way we obtain vectors of functions $\Psi$ and $\Psi^{\alpha}$. Using the above notation we may write identity (\ref{WavRep}) as $$f = (\hat{f^{\alpha}})^{T} \Psi^{\alpha}.$$ Here ${\bf{v}}^{T}$ is just the transposed vector $\bf{v}.$

\subsection{Upper Bounds on the Integration Error }\label{UB}
We want to investigate the problem of integrating $D = sd$-variate functions from the Haar-wavelet space $\mathcal{H}^{D}_{\alpha}$. To this end we put 
$$I_s: \mathcal{H}_{\alpha}^{s} \rightarrow \mathbb{R}, \quad I_sf = \int_{[0,1)^s}f(\bx) \, d\bx,$$
and assume for every  $n=1,\ldots,d, l \in \mathbb{N},$ and almost every $\omega \in \Omega^{(n)}$ that $U^{(n)}_l(\omega):\mathcal{H}_{\alpha}^{s} \rightarrow \mathbb{R}$, is a quadrature based on a scrambled $(0,l-1,s)$-net as described in Section \ref{Nets}. Furthermore, $A(L,d)$ is the Smolyak method constructed as in (\ref{SmolyakAlg}).

To make notation easier to follow for a vector $\bf{v}$ we introduce $$u({\bf{v}}) := \{ r \, | \,  v_r \neq 0\}.$$

We call the sets $$B_a = \{as+1, as+2,\ldots (a+1)s\}, \hspace{3ex} a=0,1\ldots, d-1, $$ \emph{blocks}. Given $\bj \in \mathbb{N}_0^{D}$ we say that the block $B_a$ is \emph{active (with respect to $\bj$)} if $B_a \cap u(\bj) \neq \emptyset.$ With the obvious notation we have
$$ \Psi^{D, \bj}_{\bi, \bk} := \bigotimes_{n = 1}^{d} \Psi^{s, \bj_n}_{\bi_n, \bk_n}.$$
Note that $\supp(\Psi^{s,\bj_n}_{\bi_n, \bk_n}) = E^{\bj_n - 1}_{\bk_n} \subseteq [0,1)^s.$

\begin{lemma}\label{RedDim}
Consider $\bj, \bi, \bk \in \mathbb{N}^{D}_0$ such that $\Psi^{D, \bj}_{\bi, \bk} = \Psi^{D-s, \bj'}_{\bi', \bk'} \otimes \Psi^{s,0}_{0,0}$  for some $\bj',\bi',\bk' \in \mathbb{N}_0^{D - s}$.      Then
$$A(L,d)  \Psi^{D, \bj}_{\bi, \bk} = A(L-1,d-1)  \Psi^{(d-1)s,\bj'}_{\bi', \bk'}.  $$
\end{lemma}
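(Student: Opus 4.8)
The plan is to exploit the tensor-product structure of both the algorithm (\ref{SmolyakAlg}) and the wavelet, combined with the elementary observation that the difference operator $\Delta^{(d)}_{l}$ annihilates constants unless $l=1$.

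First I would record the action of the one-dimensional building blocks on the constant function $\Psi^{s,0}_{0,0}=\mathbbm{1}_{[0,1)^s}$. Since $U^{(n)}_l$ is an equal-weight quadrature based on a $(0,l-1,s)$-net in base $b$, namely $U^{(n)}_l f = b^{-l+1}\sum_{i=1}^{b^{l-1}} f(Y_i)$, its weights sum to $1$, so $U^{(n)}_l\,\mathbbm{1}_{[0,1)^s}=1$ for every $l\ge 1$, whereas $U^{(n)}_0\equiv 0$ by convention. Hence, by (\ref{DeltaDef}), $\Delta^{(d)}_{l}\mathbbm{1}_{[0,1)^s}=1$ if $l=1$ and $\Delta^{(d)}_{l}\mathbbm{1}_{[0,1)^s}=0$ if $l\ge 2$.

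Next, I would write $\Psi^{D, \bj}_{\bi, \bk}=\bigotimes_{n=1}^{d}\Psi^{s,\bj_n}_{\bi_n,\bk_n}$ with $\Psi^{s,\bj_d}_{\bi_d,\bk_d}=\Psi^{s,0}_{0,0}$ and $\bigotimes_{n=1}^{d-1}\Psi^{s,\bj_n}_{\bi_n,\bk_n}=\Psi^{D-s,\bj'}_{\bi',\bk'}$ (so $\bj_n=\bj'_n$ for $n\le d-1$), apply (\ref{SmolyakAlg}), and use that a tensor-product operator acting on a tensor-product function factorizes blockwise. This yields, for every fixed realization of the scramblings,
$$A(L,d)\Psi^{D, \bj}_{\bi, \bk}=\sum_{\bl\in Q(L,d)}\Big(\prod_{n=1}^{d-1}\Delta^{(n)}_{l_n}\Psi^{s,\bj_n}_{\bi_n,\bk_n}\Big)\cdot\big(\Delta^{(d)}_{l_d}\mathbbm{1}_{[0,1)^s}\big).$$
By the first step only the terms with $l_d=1$ contribute, and $\bl=(l_1,\dots,l_{d-1},1)\mapsto(l_1,\dots,l_{d-1})$ is a bijection from $\{\bl\in Q(L,d):l_d=1\}$ onto $Q(L-1,d-1)$. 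After this reindexing the right-hand side becomes $\sum_{\bl'\in Q(L-1,d-1)}\prod_{n=1}^{d-1}\Delta^{(n)}_{l'_n}\Psi^{s,\bj'_n}_{\bi'_n,\bk'_n}=A(L-1,d-1)\Psi^{(d-1)s,\bj'}_{\bi',\bk'}$, where $A(L-1,d-1)$ is understood to use the first $d-1$ families of building blocks.

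There is no real obstacle here; the only points requiring care are the bookkeeping of the block/coordinate indices and noting that the claimed equality is an identity of random variables holding pointwise in $\omega\in\Omega$ (hence almost surely), which follows purely from the blockwise factorization of the tensor-product operators and does not invoke the independence of the randomizations across blocks.
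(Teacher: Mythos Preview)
Your proposal is correct and follows essentially the same route as the paper: apply the defining sum (\ref{SmolyakAlg}), factor the tensor product blockwise, use that $\Delta^{(d)}_l \mathbbm{1}_{[0,1)^s}$ vanishes for $l\ge 2$ and equals $1$ for $l=1$, and reindex the surviving terms over $Q(L-1,d-1)$. The only difference is that you spell out explicitly why the difference operators annihilate the constant, which the paper leaves implicit.
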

A similar lemma was proved for deterministic Smolyak algorithms on multiwavelet spaces, see \cite[Lemma 4.3.]{GLSS07}
\begin{proof}
Straightforward calculations yield
\begin{align*}
A(L,d) \Psi^{D, \bj}_{\bi, \bk}
&=\sum_{\bl \in Q(L,d)}\bigotimes_{n=1}^{d-1} \Delta^{(n)}_{l_n} \Psi^{s,\bj_n}_{\bi_n,\bk_n} \otimes \Delta^{(d)}_{l_d} \mathbbm{1}_{[0,1)^s} \\
&=\sum_{\bl \in Q(L-1,d-1)}\bigotimes_{n=1}^{d-1} \Delta^{(n)}_{l_n} \Psi^{s,\bj_n}_{\bi_n,\bk_n} \otimes \Delta^{(d)}_{1} \mathbbm{1}_{[0,1)^s} \\
&= \sum_{\bl \in Q(L-1,d-1)}\bigotimes_{n=1}^{d-1} \Delta^{(n)}_{l_n} \Psi^{s,\bj_n}_{\bi_n,\bk_n} \\
&= A(L-1,d-1) \Psi^{(d-1)s,\bj'}_{\bi', \bk'} .
\end{align*}
\end{proof}

By simple induction we get the following corollary.

\begin{corollary} \label{DimRed}
Consider $\bj, \bi, \bk \in \mathbb{N}^{D}_0$ such that the function $\Psi^{D,\bj}_{\bi, \bk}$ has $d-t > 0$ active blocks. Let  $\bj', \bi', \bk' \in \mathbb{N}^{(d-t)s}$ be the vectors $\bj, \bi, \bk$ where the coordinates from the inactive $t$ blocks are removed. Then
$$A(L,d)  \Psi^{D,\bj}_{\bi, \bk} = A(L-t,d-t)  \Psi^{(d-t)s,\bj'}_{\bi', \bk'} $$
\end{corollary}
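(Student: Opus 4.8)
The plan is to induct on the number $t$ of inactive blocks, keeping $d$ and $L$ free; that is, for each $t\ge 0$ I would prove the statement \emph{``for all $d>t$, all $L\ge d$, and all $\bj,\bi,\bk\in\mathbb{N}_0^{ds}$ for which exactly $t$ of the blocks $B_0,\ldots,B_{d-1}$ are inactive, the displayed identity holds.''} The case $t=0$ is vacuous (then $(d-t)s=ds$ and nothing is removed), so assume the claim for $t-1$ and let $\Psi^{D,\bj}_{\bi,\bk}$ have exactly $t\ge 1$ inactive blocks.

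The key step is to peel off one inactive block. Pick an inactive block $B_a$, with index $n_0=a+1\in[d]$; inactivity of $B_a$ forces $j_r=k_r=i_r=0$ for $r\in B_a$ and hence $\Psi^{s,\bj_{n_0}}_{\bi_{n_0},\bk_{n_0}}=\mathbbm{1}_{[0,1)^s}$. Now I would rerun the computation from the proof of Lemma~\ref{RedDim} with the coordinate $n_0$ playing the role of $d$: in $A(L,d)\Psi^{D,\bj}_{\bi,\bk}=\sum_{\bl\in Q(L,d)}\bigotimes_{n=1}^d\Delta^{(n)}_{l_n}\Psi^{s,\bj_n}_{\bi_n,\bk_n}$ the $n_0$-th tensor factor is $\Delta^{(n_0)}_{l_{n_0}}\mathbbm{1}_{[0,1)^s}$, which equals the number $1$ if $l_{n_0}=1$ and $0$ otherwise (each $U^{(n_0)}_l$ has weights summing to $1$ and $U^{(n_0)}_0\equiv 0$). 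Hence only the summands with $l_{n_0}=1$ survive, deleting the $n_0$-th coordinate of $\bl$ gives an order-preserving bijection from $\{\bl\in Q(L,d):l_{n_0}=1\}$ onto $Q(L-1,d-1)$, and we obtain (as an almost sure identity of random variables)
$$A(L,d)\Psi^{D,\bj}_{\bi,\bk}=A(L-1,d-1)\Psi^{(d-1)s,\bj'}_{\bi',\bk'},$$
where $A(L-1,d-1)$ is the randomized Smolyak method built from the $d-1$ building-block sequences $(U^{(n)}_l)_l$, $n\in[d]\setminus\{n_0\}$ (relabelled in increasing order), and $\bj',\bi',\bk'$ are $\bj,\bi,\bk$ with the block $B_a$ deleted. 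Since the $d$ building-block sequences enter the construction symmetrically, this reduced method is again of the type allowed in the statement, and $\Psi^{(d-1)s,\bj'}_{\bi',\bk'}$ has exactly $t-1$ inactive blocks among its $d-1$ blocks. Applying the induction hypothesis to it yields $A(L-1,d-1)\Psi^{(d-1)s,\bj'}_{\bi',\bk'}=A(L-t,d-t)\Psi^{(d-t)s,\bj''}_{\bi'',\bk''}$, where $\bj'',\bi'',\bk''$ are obtained from $\bj,\bi,\bk$ by deleting all $t$ inactive blocks; this is the assertion.

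\textbf{Main obstacle.} The only genuine issue is the bookkeeping in the inductive step: Lemma~\ref{RedDim} is phrased only for removing the \emph{last} block, so I must observe that its proof uses solely the inactivity of that block and not its position, and I must keep track of which $d-1$ building-block sequences index the lower-level Smolyak method produced at each step. Both points are harmless because the construction is symmetric in its $d$ building-block sequences; beyond that, the argument is just the reindexing already carried out in the proof of Lemma~\ref{RedDim}, iterated $t$ times.
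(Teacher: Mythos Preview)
Your proposal is correct and is exactly the approach the paper intends: the paper's entire proof is the sentence ``By simple induction we get the following corollary,'' and your write-up is a careful unpacking of that induction, including the observation that the computation in Lemma~\ref{RedDim} does not depend on the inactive block being the last one. The bookkeeping concern you flag (which building-block sequences index the reduced algorithm) is the only thing to keep track of, and you handle it correctly.
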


Recall that a randomized quadrature $Q$ is \emph{exact} if almost surely $Qf$ is equal to the integral of $f$ for all functions $f$ from the input space. Note that exact randomized quadratures are automatically unbiased.  

\begin{lemma} \label{Exact2}
 Let $L,d,s,l \in \mathbb{N}, L \geq d$ and $D = ds.$ For $n = 1,\ldots, d,$ we have
 \begin{enumerate}
 \item $U^{(n)}_l$ is exact on $V^{s,l-1}$
 \item $A(L,d)$ is exact on $V^{D, L-d}.$
 \end{enumerate}
 \end{lemma}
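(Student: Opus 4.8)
The plan is to prove the two statements in turn, using the structural facts about $(0,m,s)$-nets established in Section~\ref{Nets} together with the building-block representation \eqref{alg_bb_rep} of the Smolyak operator.

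\textbf{Statement (1).} First I would recall that $U^{(n)}_l$ is the equal-weight quadrature associated with a scrambled $(0,l-1,s)$-net $\boldsymbol{\sigma}(\mathcal{P})$ in base $b$, and that (as noted in Section~\ref{Nets}) a scrambled $(0,l-1,s)$-net is almost surely again a $(0,l-1,s)$-net. By the defining property of a $(0,l-1,s)$-net, each elementary interval of volume $b^{-(l-1)}$ contains exactly one net point; in particular, for any $\bj$ with $|\bj| = l-1$ and any $\bk \in \vartheta_{\bj}$, the cell $E^{\bj}_{\bk}$ contains exactly one of the $b^{l-1}$ points. Hence for a function $g$ that is constant on each such cell --- i.e. $g \in V^{s,l-1}$ --- the quadrature sum $b^{-(l-1)}\sum_i g(Y_i)$ equals $b^{-(l-1)}\sum_{\bk} g|_{E^{\bj}_{\bk}} = \int_{[0,1)^s} g$, since each cell has volume $b^{-(l-1)}$ and is hit exactly once. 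This shows $U^{(n)}_l g = I_s g$ almost surely for $g \in V^{s,l-1}$, which is statement (1). (The degenerate case $l=1$, where $V^{s,0}$ consists of constants and $U^{(n)}_1$ uses the single point $Y_1$ distributed uniformly, and $U_0^{(n)} = 0$, is handled directly.)

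\textbf{Statement (2).} Here I would use the representation \eqref{alg_bb_rep},
$$A(L,d) = \sum_{L-d+1 \leq |\bl| \leq L} (-1)^{L-|\bl|}\binom{d-1}{L-|\bl|} \bigotimes_{n=1}^d U^{(n)}_{l_n},$$
together with the fact that $V^{D,L-d} = \sum_{\bj \in J(d,L-d)} \bigotimes_{n=1}^d V^{s,j_n}$. By linearity it suffices to show $A(L,d)$ is exact on each summand $\bigotimes_{n=1}^d V^{s,j_n}$ with $|\bj| = L-d$, and in fact on elementary tensors $g = \bigotimes_n g_n$ with $g_n \in V^{s,j_n}$. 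For such a $g$ and any $\bl$ appearing in \eqref{alg_bb_rep} we have $|\bl| \geq L-d+1$, so $\sum_n l_n \geq L-d+1 = |\bj|+1 = \sum_n(j_n+1)$; I would argue that this forces $l_n \geq j_n + 1$, i.e. $l_n - 1 \geq j_n$, for \emph{every} $n$ --- this is the point that needs care, since a priori the inequality $\sum_n l_n \geq \sum_n (j_n+1)$ only gives it "on average". The resolution is that $A(L,d)$ is built from the $\Delta^{(n)}_{l_n} = U^{(n)}_{l_n} - U^{(n)}_{l_n-1}$, and $\Delta^{(n)}_{l_n}$ applied to $g_n \in V^{s,j_n}$ vanishes as soon as $l_n - 1 \geq j_n$ and $l_n - 2 \geq j_n$, i.e. $l_n \geq j_n + 2$, by statement (1) (both $U^{(n)}_{l_n}$ and $U^{(n)}_{l_n-1}$ then return $I_s g_n$, so their difference is $0$); hence in the sum \eqref{SmolyakAlg} only multi-indices $\bl$ with $l_n \leq j_n + 1$ for all $n$ survive, and combined with $|\bl| \le L$... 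Actually the cleaner route is to go back to the $\Delta$-representation \eqref{SmolyakAlg}: write
$$A(L,d) g = \sum_{\bl \in Q(L,d)} \bigotimes_{n=1}^d \Delta^{(n)}_{l_n} g_n,$$
observe that the term vanishes unless $l_n \le j_n + 1$ for every $n$ (else some factor $\Delta^{(n)}_{l_n} g_n = 0$ by part (1)), so the effective sum ranges over $\bl$ with $1 \le l_n \le j_n+1$; but $\sum_n (j_n+1) = L-d + d = L$, so the constraint $|\bl| \le L$ is automatically satisfied and the sum is the full tensor-product telescoping sum $\bigotimes_{n=1}^d \big(\sum_{l_n=1}^{j_n+1} \Delta^{(n)}_{l_n}\big) g_n = \bigotimes_{n=1}^d (U^{(n)}_{j_n+1} - U^{(n)}_0) g_n = \bigotimes_{n=1}^d U^{(n)}_{j_n+1} g_n = \bigotimes_{n=1}^d I_s g_n = I_D g$, using part (1) once more since $g_n \in V^{s,j_n} = V^{s,(j_n+1)-1}$.

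\textbf{Main obstacle.} The only genuinely delicate point is the bookkeeping in part (2): verifying that the constraint $|\bl| \le L$ in $Q(L,d)$ is rendered vacuous once one restricts to the surviving multi-indices $l_n \le j_n+1$, which hinges precisely on the identity $\sum_{n}(j_n+1) = (L-d)+d = L$ coming from $|\bj| = L-d$. Everything else --- the exactness in part (1) and the telescoping of the $\Delta$'s --- is routine once the net property and Lemma~\ref{WeightsSum}-style weight considerations are in hand. I would present part (2) via the $\Delta$-representation \eqref{SmolyakAlg} rather than \eqref{alg_bb_rep}, as above, since it makes the telescoping transparent and sidesteps any awkward "on average" argument.
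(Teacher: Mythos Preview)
Your proof is correct, but the route you take for part~(2) differs from the paper's. The paper works with the wavelet spanning set of $V^{D,L-d}$ and the combinatorial representation~\eqref{alg_bb_rep}: for a wavelet $\Psi^{D,\bj}_{\bi,\bk}$ with $|\bj|\le L-d$ and all blocks active, every $\bl$ in~\eqref{alg_bb_rep} satisfies $|\bl|\ge L-d+1 > |\bj|$, so by pigeonhole some coordinate has $l_\mu > |\bj_\mu|$, whence $U^{(\mu)}_{l_\mu}\Psi^{s,\bj_\mu}_{\bi_\mu,\bk_\mu}=0$ and the whole term vanishes; the constant function and wavelets with inactive blocks are handled separately via Lemma~\ref{WeightsSum} and Corollary~\ref{DimRed}. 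Your argument instead stays with the $\Delta$-representation~\eqref{SmolyakAlg} and elementary tensors $g=\bigotimes_n g_n$, $g_n\in V^{s,j_n}$: you observe that $\Delta^{(n)}_{l_n}g_n=0$ once $l_n\ge j_n+2$, that the surviving indices automatically satisfy $|\bl|\le\sum_n(j_n+1)=L$, and then telescope to $\bigotimes_n U^{(n)}_{j_n+1}g_n = I_D g$. This is more self-contained --- no case split on active versus inactive blocks, no appeal to Corollary~\ref{DimRed} or Lemma~\ref{WeightsSum} --- and makes the role of the level constraint $|\bj|=L-d$ entirely transparent. The paper's approach, on the other hand, ties in naturally with the wavelet machinery used in the subsequent error analysis (Lemmas~\ref{ZeroOutDiag}--\ref{MatEntBound}), where the same pigeonhole-on-$\bl$ idea recurs.
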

 Similar statements as Lemma \ref{Exact2} (2) have been provided for deterministic Smolyak algorithms over different function spaces e.g. in \cite[Theorem 2]{NR96} and \cite[Theorem 4.1.]{GLSS07}.
\begin{proof}[Proof of Lemma \ref{Exact2}]
The first statement follows immediately by the definitions of $(0,l-1,s)$-nets and $V^{s,l-1}.$ We prove the second statement.
Since the algorithm is linear for almost every $\omega \in \Omega$ it suffices to prove the exactness of $A(L,d)$ on all the wavelets $\Psi^{D, \bj}_{\bi, \bk}, |\bj| \leq L-d$. Fix one such function. The case $\Psi^{D, \bj}_{\bi, \bk} \equiv 1$ follows by Lemma \ref{WeightsSum}.
 Suppose now that all the blocks of $\Psi^{D, \bj}_{\bi, \bk}$ are active.  We are using the representation (\ref{alg_bb_rep}) 
\begin{equation}
A(L,d) = \sum_{ L-d+1 \leq |\bl| \leq L} (-1)^{L-|\bl|} \binom{d-1}{L-|\bl|} \bigotimes_{n = 1}^d U^{(n)}_{l_n}.
\end{equation}
By the first statement, for $l'>l$ the algorithm $U_{l'}^{(k)}$ is exact on $V^{s,l}.$
Take any ${\bf l}$ over which we are summing up in the above formula. Since $\sum_{n = 1}^d l_n \geq L-d+1$ and $\sum_{n = 1}^d |\bj_n| \leq L-d$ there exists at least one index $\mu$ for which $l_{\mu} > |\bj_{\mu}|$ and so $U^{(\mu)}_{l_{\mu}} \Psi^{s, \bj_{\mu}}_{\bi_{\mu}, \bk_{\mu}} = 0.$ Hence $A(L,d)\Psi^{D, \bj}_{\bi, \bk} = 0,$ which is the exact value of the integral of $\Psi^{D, \bj}_{\bi, \bk}.$

If $\bj$ admits $t < d$ inactive blocks we may use Corollary \ref{DimRed} and the above argument applied to $A(L-t,d-t).$
\end{proof}



\begin{lemma}\label{ZeroOutDiag}
Let  $(U^{(n)}_l)_{l \in \mathbb{N}}$ for every $n =1,\ldots, d,$ be a sequence of independent quadratures based on scrambled nets in base $b.$ 
If $\bj \neq \bj'$ or if $\bk \neq \bk',$ then for every $l,l' \in \mathbb{N}$

\begin{equation} \label{CrossTerms1}
\Expec\left[U^{(n)}_l \Psi^{s,\bj}_{\bi, \bk} U^{(n)}_{l'} \Psi^{s,\bj'}_{\bi', \bk'} \right] = 0.
\end{equation}

As a result, if either $\bj \neq \bj'$ or $\bk \neq \bk',$  for any $L \in \mathbb{N}$ the Smolyak method $A(L,d)$ based on building blocks $(U^{(n)}_l)_{l \in \mathbb{N}, n \in [d]}$  satisfies

\begin{equation} \label{CrossTerms2}
\Expec \bigg[A(L,d) \Psi^{D, \bj}_{\bi, \bk} A(L,d) \Psi^{D, \bj'}_{ \bi', \bk'}\bigg]  = 0.
\end{equation}

\end{lemma}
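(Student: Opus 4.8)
The plan is to prove \eqref{CrossTerms1} first and then derive \eqref{CrossTerms2} from it. For the reduction I would write $A(L,d)$ in the form (\ref{alg_bb_rep}), so that $A(L,d)\Psi^{D,\bj}_{\bi,\bk}=\sum_{\bl}c_{\bl}\prod_{n=1}^{d}U^{(n)}_{l_n}\Psi^{s,\bj_n}_{\bi_n,\bk_n}$ with the deterministic coefficients $c_{\bl}$ from (\ref{alg_bb_rep}), where $\bj_n,\bi_n,\bk_n\in\N_0^s$ are the block-$n$ subvectors of $\bj,\bi,\bk$, and similarly for $\Psi^{D,\bj'}_{\bi',\bk'}$. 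Multiplying the two representations and using that the building blocks attached to different blocks $n$ are randomized independently, each term $\Expec\big[\prod_{n=1}^d(U^{(n)}_{l_n}\Psi^{s,\bj_n}_{\bi_n,\bk_n})(U^{(n)}_{l'_n}\Psi^{s,\bj'_n}_{\bi'_n,\bk'_n})\big]$ factorizes over $n$. Since $\bj\neq\bj'$ or $\bk\neq\bk'$, some block $n^{\ast}$ satisfies $\bj_{n^{\ast}}\neq\bj'_{n^{\ast}}$ or $\bk_{n^{\ast}}\neq\bk'_{n^{\ast}}$, and \eqref{CrossTerms1} applied to the $n^{\ast}$-th building-block sequence kills the corresponding factor; hence every summand of the double sum over $(\bl,\bl')$ vanishes, giving \eqref{CrossTerms2}.

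For \eqref{CrossTerms1} I would separate $l\neq l'$ and $l=l'$. If $l\neq l'$, then $U^{(n)}_{l}$ and $U^{(n)}_{l'}$ are built from independent scramblings and remainder terms, so the expectation splits as $\Expec[U^{(n)}_{l}\Psi^{s,\bj}_{\bi,\bk}]\cdot\Expec[U^{(n)}_{l'}\Psi^{s,\bj'}_{\bi',\bk'}]$; since every scrambled net point is uniformly distributed on $[0,1)^s$, each factor equals the integral of the respective wavelet, which is $0$ unless the corresponding frequency vector is the zero vector. Because $\bk$ can be the zero vector only when $\bj$ is, the hypothesis forces at least one of $\bj,\bj'$ to be nonzero, so one factor is $0$. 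If $l=l'$, I would write $U^{(n)}_{l}\Psi=b^{-(l-1)}\sum_{i}\Psi(Y_i)$, turning the left-hand side of \eqref{CrossTerms1} into $b^{-2(l-1)}\sum_{i,i'}\Expec[\Psi^{s,\bj}_{\bi,\bk}(Y_i)\Psi^{s,\bj'}_{\bi',\bk'}(Y_{i'})]$; since the scramblings and remainders of distinct coordinates $t\in[s]$ are independent, each term factorizes as $\prod_{t=1}^{s}\Expec[\psi^{j_t}_{i_t,k_t}(Y_{i,t})\psi^{j'_t}_{i'_t,k'_t}(Y_{i',t})]$, where $Y_{i,t}$ denotes the $t$-th coordinate of the $i$-th net point. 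It then suffices to show that the factor at a coordinate $t^{\ast}$ with $(j_{t^{\ast}},k_{t^{\ast}})\neq(j'_{t^{\ast}},k'_{t^{\ast}})$ vanishes for every pair $(i,i')$ (such $t^{\ast}$ exists because $\bj\neq\bj'$ or $\bk\neq\bk'$).

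This localizes everything to a one-dimensional claim: for $p,p'\in[0,1)$, a single random $b$-ary scrambling $\sigma$, and independent uniform remainder terms, putting $Y=\sigma(p)+b^{-(l-1)}\xi$ and $Y'=\sigma(p')+b^{-(l-1)}\xi'$, one has $\Expec[\psi^{j}_{i,k}(Y)\,\psi^{j'}_{i',k'}(Y')]=0$ whenever $(j,k)\neq(j',k')$; for $i=i'$ (same net point, so $Y=Y'$ is uniform) this is just $\langle\psi^{j}_{i,k},\psi^{j'}_{i',k'}\rangle_{L^2}$, which is $0$ because wavelets with different shift have disjoint support and a wavelet is constant on the support of any strictly finer one. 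Writing $B_m(\cdot)$ for the $m$-th $b$-ary digit, I would prove the claim, assuming $j\le j'$, from two elementary features of scramblings: (a) $B_1(Y),B_2(Y),\ldots$ are i.i.d.\ uniform on $\{0,\ldots,b-1\}$, and on its support $\psi^{j}_{i,k}$ equals a constant times $b\mathbbm{1}[B_j(Y)=i]-1$, a mean-zero function of $B_j(Y)$ alone; and (b) $B_m(Y)=B_m(Y')$ for $m\le e$, where $e$ is the common-prefix length of $p,p'$ truncated to the scrambling depth, while for $m\ge e+2$ the digit $B_m(Y')$ is uniform and independent of all $B_{m'}(Y)$ and of $B_1(Y'),\ldots,B_{m-1}(Y')$. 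If $j'\ge e+2$, then $B_{j'}(Y')$ is such a free digit, so averaging over it first annihilates the $\psi^{j'}_{i',k'}(Y')$ factor; if $j'\le e+1$, then $j\le j'\le e+1$ forces $\psi^{j}_{i,k}(Y)$ to depend only on digits shared with $Y'$, so replacing $Y$ by $Y'$ there reduces the expectation to $\langle\psi^{j}_{i,k},\psi^{j'}_{i',k'}\rangle_{L^2}=0$ when $j<j'$, whereas for $j=j'$ (hence $k\neq k'$) the two support indicators are incompatible and the product is identically $0$.

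The routine parts are the independence and factorization bookkeeping of the first two paragraphs; the real work is the one-dimensional claim, i.e.\ using the tree structure of a $b$-ary scrambling to either locate a free, mean-zero digit or fall back on $L^2$-orthogonality. This one-dimensional fact — that scrambled-net estimators of wavelet coefficients with distinct (resolution, shift) pairs are uncorrelated — can alternatively be extracted from the standard covariance analysis of scrambled nets (Owen, Heinrich--Hickernell--Yue); I would nevertheless give the short direct argument above, which keeps the exposition self-contained, the case distinction being very brief once (a) and (b) are isolated.
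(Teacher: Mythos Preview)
Your proof is correct and follows the same route as the paper: split $l\neq l'$ (independence plus unbiasedness, noting that the hypothesis forces at least one of $\bj,\bj'$ to be nonzero) from $l=l'$, and then deduce \eqref{CrossTerms2} from \eqref{CrossTerms1} by expanding $A(L,d)$ via \eqref{alg_bb_rep} and using the block-wise independence to factor each cross term. The only difference is that for $l=l'$ the paper simply invokes \cite[Lemma~4]{Owe97}, whereas you give a self-contained one-dimensional argument using the tree structure of the scrambling; your case analysis (free digit at level $\ge e+2$ versus shared first $e$ digits, with the $j=j'$, $k\neq k'$ case handled by incompatible support indicators) is exactly a reproof of Owen's lemma in the present notation.
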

\begin{proof}

If $l \neq l'$, identity \eqref{CrossTerms1} follows from the independence of $U^{(n)}_l$ and $U^{(n)}_{l'}.$ If $l = l'$ then \eqref{CrossTerms1} follows from \cite[Lemma 4]{Owe97}. Now \eqref{CrossTerms2} may be easily deduced using representation \eqref{alg_bb_rep}. 

\end{proof}

\begin{lemma}\label{ExactBoundBB}
There exist positive constants $c_{b,s}, C_{b,s} $ depending only on $b,s,$ such that for every $l \in \mathbb{N}$ and $\bj \in \mathbb{N}^s_0$ with $|\bj| \geq l + s - 1$ and every admissible $\bi, \bk$ we have 
$$c_{b,s} b^{-l} \leq \Expec\left[ (U_l \Psi^{s, \bj}_{\bi, \bk})^2 \right] \leq C_{b,s} b^{-l},$$
where $U_l$ denotes an RQMC quadrature based on a scrambled $(0,l-1,s)$-net  in base $b$.
\end{lemma}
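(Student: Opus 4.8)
The plan is to evaluate $\Expec\big[(U_l\Psi^{s,\bj}_{\bi,\bk})^2\big]$ almost exactly. Write $N:=b^{l-1}$ for the number of nodes $Y_1,\dots,Y_N$ of the scrambled $(0,l-1,s)$-net underlying $U_l$, so that $U_l\Psi^{s,\bj}_{\bi,\bk}=N^{-1}\sum_{i=1}^N\Psi^{s,\bj}_{\bi,\bk}(Y_i)$ and hence
$$\Expec\big[(U_l\Psi^{s,\bj}_{\bi,\bk})^2\big]=\frac1{N^2}\sum_{i=1}^N\Expec\big[\Psi^{s,\bj}_{\bi,\bk}(Y_i)^2\big]+\frac1{N^2}\sum_{i\ne i'}\Expec\big[\Psi^{s,\bj}_{\bi,\bk}(Y_i)\,\Psi^{s,\bj}_{\bi,\bk}(Y_{i'})\big].$$
The heart of the argument is that every cross term is identically $0$. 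First I would record that $\supp(\Psi^{s,\bj}_{\bi,\bk})=E^{\bj-1}_{\bk}$ is — interpreting a coordinate with $j_t=0$, for which $\psi^0_{0,0}=\mathbbm{1}_{[0,1)}$, as resolution level $0$ rather than $-1$ — an $s$-dimensional elementary interval of volume $b^{-(|\bj|-a)}$, where $a:=\#\{t:j_t>0\}\le s$. Since $|\bj|\ge l+s-1$, this gives $|\bj|-a\ge l-1$, so $\supp(\Psi^{s,\bj}_{\bi,\bk})$ is contained in some $s$-dimensional elementary interval $Q$ of volume exactly $b^{-(l-1)}$. As recalled in Section~\ref{Nets}, the scrambled point set underlying $U_l$ is again a $(0,l-1,s)$-net, and this holds for \emph{every} realization of the random scrambling (the permutations map elementary intervals bijectively onto elementary intervals, and the uniform remainder terms of scale $b^{-(l-1)}$ cannot move a node out of its level-$(l-1)$ box). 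Hence $Q$ contains exactly one node, so $\supp(\Psi^{s,\bj}_{\bi,\bk})$ contains at most one node, and for $i\ne i'$ we have $\Psi^{s,\bj}_{\bi,\bk}(Y_i)\,\Psi^{s,\bj}_{\bi,\bk}(Y_{i'})=0$ surely. The sum over $i\ne i'$ therefore drops out entirely.

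For the diagonal I would use that each node $Y_i$ of a scrambled net is uniformly distributed on $[0,1)^s$ (Section~\ref{Nets}), so
$$\Expec\big[\Psi^{s,\bj}_{\bi,\bk}(Y_i)^2\big]=\int_{[0,1)^s}\big(\Psi^{s,\bj}_{\bi,\bk}\big)^2=\prod_{t=1}^s\big\lVert\psi^{j_t}_{i_t,k_t}\big\rVert_{L^2}^2=:\kappa,$$
where $\kappa$ does not depend on $i$ or on $\bk$, and, being a product of at most $s$ univariate wavelet $L^2$-norms (each a fixed positive number depending only on $b$), satisfies $\kappa_0^{\,s}\le\kappa\le 1$ for some constant $\kappa_0=\kappa_0(b)>0$. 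Substituting yields
$$\Expec\big[(U_l\Psi^{s,\bj}_{\bi,\bk})^2\big]=\frac{\kappa}{N}=\kappa\,b^{-(l-1)}=\kappa\,b\cdot b^{-l},$$
which is exactly the claimed two-sided bound with, e.g., $c_{b,s}:=b\,\kappa_0^{\,s}$ and $C_{b,s}:=b$.

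I do not expect a genuine obstacle; the proof is short once one adopts this viewpoint. The two points that need a little care are: (i) the bookkeeping that identifies $\supp(\Psi^{s,\bj}_{\bi,\bk})$ with an $s$-dimensional elementary interval of the correct volume when some $j_t$ vanish — and the observation that the hypothesis $|\bj|\ge l+s-1$ is precisely what forces this volume to be $\le b^{-(l-1)}$; and (ii) invoking that the net property is preserved \emph{deterministically} (not merely almost surely) under scrambling, which is what upgrades "the cross terms vanish in expectation" to "the cross terms vanish surely" and makes the computation exact.
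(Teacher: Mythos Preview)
Your proof is correct and follows essentially the same approach as the paper's: both hinge on the observation that, since $|\bj|\ge l+s-1$, the support $E^{\bj-1}_{\bk}$ sits inside an elementary interval of volume $b^{-(l-1)}$ and hence contains at most one (scrambled) net point, so the square of the quadrature reduces to a single term. The paper then evaluates that term by explicitly writing out $|\Psi^{s,\bj}_{\bi,\bk}|$ on its support and the hitting probability, whereas you read it off directly as $\|\Psi^{s,\bj}_{\bi,\bk}\|_{L^2}^2/N$ via the uniform marginal of each node---a slightly cleaner bookkeeping, but the same argument.
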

\begin{proof}

For $\Psi^{s, \bj}_{\bi, \bk}$ as above and $x \in \supp(\Psi^{s, \bj}_{\bi, \bk})$ we  have
$$\Psi^{s, \bj}_{\bi, \bk}(x) = \prod_{t = 1}^{s} b^{\frac{j_t - 2}{2}} \gamma_t(x),$$
where $\gamma_t(x) \in \{ b-1, -1, b \}$. It follows that  $|\Psi^{s, \bj}_{\bi, \bk}(x)| =  b^{\frac{|\bj|}{2}  - s}\gamma(x),$ with $\gamma(x) = \prod_{t = 1}^s |\gamma_t(x)|.$ Put $\overline{\gamma} = \lambda^s(\supp(\Psi^{s, \bj}_{\bi, \bk}))^{-1} \int_{\supp(\Psi^{s, \bj}_{\bi, \bk})} \gamma(x)^2 \, dx$. Note that $1\leq \overline{\gamma} \leq b^{2s}.$ Now, since $|\bj| \geq l+s-1$, at most one point of the net used by $U_l$ falls into $\supp(\Psi^{s, \bj}_{\bi, \bk})$ and this happens with probability $b^{l - |\bj|} \xi(\bj),$ where $1 \leq \xi(\bj) \leq b^{s-1}.$ Denoting by $(p_t)_{t = 1}^{b^{l-1}}$ the scrambled net used by $U_l$ we get
\begin{align*}
&\Expec \left[  (U_l\Psi^{s, \bj}_{\bi, \bk})^2   \right]  = b^{-2l + 2} \Expec \left[  \left( \sum_{t = 1}^{b^{l-1}}  \Psi^{s, \bj}_{\bi, \bk}(p_t)  \right)^2    \right]
\\
& = b^{-2l+2}  b^{l - |\bj|} \xi(\bj) b^{|\bj| - 2s} \overline{\gamma}.
\end{align*}
It follows
$$b^{2-2s} b^{-l} \leq  \Expec\left[ (U_l \Psi^{s, \bj}_{\bi, \bk})^2 \right]   \leq b^{1+s}b^{-l} $$
\end{proof}

\begin{lemma} \label{MatEntBound}
Let $\Psi^{D, \bj}_{\bi, \bk}$, $\Psi^{D, \bj}_{\bi', \bk}$ be two (not necessarily different)  wavelets and for $L \geq d$ let $A(L,d)$ be the Smolyak method as in (\ref{SmolyakAlg}), where all the building blocks $U^{(n)}_l, n = 1,\ldots, d, l \in \mathbb{N},$ are randomized independently. Then there exists a constant $C = C(b,d,s)$ which does not depend on $L, \bj$ nor $\bk$, such that
$$\left|\Expec\bigg(  \big[   (I_D - A(L,d))   \Psi^{D, \bj}_{\bi, \bk}      \big] \big[ (I_D - A(L,d))\Psi^{D, \bj}_{\bi', \bk}     \big]                  \bigg) \right| \leq C|\bj|^{d-1}b^{-L}. $$
\end{lemma}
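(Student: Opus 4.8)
The plan is to estimate the quantity blockwise, separating the contribution of active and inactive blocks of $\bj$, and then to reduce to a one-dimensional (i.e.\ single-block) estimate that can be handled by Lemma~\ref{ExactBoundBB} together with the exactness statement of Lemma~\ref{Exact2}. First I would apply Corollary~\ref{DimRed} to strip off the inactive blocks: if $\Psi^{D,\bj}_{\bi,\bk}$ has $d-t$ active blocks, then $(I_D-A(L,d))\Psi^{D,\bj}_{\bi,\bk} = (I_{(d-t)s}-A(L-t,d-t))\Psi^{(d-t)s,\bj'}_{\bi',\bk'}$, where $\bj'$ has all blocks active; since we aim for a bound of the form $C|\bj|^{d-1}b^{-L}$ and $|\bj'|=|\bj|$, while $L-t \le L$, it suffices to prove the estimate in the case where every block is active (the extra factors are absorbed into the constant $C(b,d,s)$, noting $d-t-1 \le d-1$). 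So from now on assume all $d$ blocks of $\bj$ are active, and in particular $|\bj|\ge d$.

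Next I would use the building-block representation \eqref{alg_bb_rep} to write $(I_D-A(L,d))\Psi^{D,\bj}_{\bi,\bk}$ as a linear combination, over $\bl$ with $L-d+1\le|\bl|\le L$, of tensor products $\bigotimes_{n=1}^d U^{(n)}_{l_n}\Psi^{s,\bj_n}_{\bi_n,\bk_n}$ (with $I_D$ itself being the tensor product of the exact integrals, which we may fold in by writing $I_s = U^{(n)}_\infty$ formally, or treat separately). Expanding the product of two such sums, using independence across coordinates $n$ (the factorization $\Prob=\bigotimes\Prob^{(n)}$) to split the expectation into a product of one-dimensional expectations $\Expec[U^{(n)}_{l_n}\Psi^{s,\bj_n}_{\bi_n,\bk_n}\cdot U^{(n)}_{l'_n}\Psi^{s,\bj_n}_{\bi'_n,\bk_n}]$, I reduce everything to bounds on one-block factors. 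For a single block $n$: if $l_n - 1 \ge |\bj_n|$ then $\Psi^{s,\bj_n}_{\bi_n,\bk_n}\in V^{s,l_n-1}$, so by Lemma~\ref{Exact2}(1) the quadrature $U^{(n)}_{l_n}$ is exact there and the corresponding factor in $I_D-A(L,d)$ vanishes; if instead $|\bj_n|\ge l_n+s-1$, Lemma~\ref{ExactBoundBB} gives $\Expec[(U^{(n)}_{l_n}\Psi^{s,\bj_n}_{\bi_n,\bk_n})^2]\le C_{b,s}b^{-l_n}$, and by Cauchy--Schwarz the mixed factor $\Expec[U^{(n)}_{l_n}\Psi^{s,\bj_n}_{\bi_n,\bk_n}\cdot U^{(n)}_{l'_n}\Psi^{s,\bj_n}_{\bi'_n,\bk_n}]$ is bounded by $C_{b,s}b^{-(l_n+l'_n)/2}$; the few remaining intermediate cases $l_n \le |\bj_n| \le l_n + s - 2$ and the exact-integral factors can be bounded crudely by a constant depending only on $b,s$ (there are at most $s-1$ such "transitional" levels per block). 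Multiplying the $d$ factors and summing $|U^{(n)}_{l_n}\cdots|$ suitably, the dominant term is of order $\prod_n b^{-l_n/2}\cdot b^{-l'_n/2}$ summed against the binomial weights, and after summing over all $\bl,\bl'$ with $|\bl|,|\bl'|\ge L-d+1$ one gets a geometric series dominated by $b^{-(L-d+1)}$ times a polynomial-in-$|\bj|$ factor counting the number of $(\bj_1,\dots,\bj_d)$ decompositions with $\sum|\bj_n|=|\bj|$, which is $O(|\bj|^{d-1})$.

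The one real subtlety — and the step I expect to be the main obstacle — is bookkeeping the interplay between the constraint $|\bl|\ge L-d+1$ coming from \eqref{alg_bb_rep} and the "exactness cutoff'' $l_n-1\ge|\bj_n|$ versus the "small-level'' regime $l_n\le|\bj_n|$. One must argue that for each summand $\bl$ the product over $n$ is either zero (some block is in the exact regime) or is genuinely of size $\lesssim\prod_n b^{-l_n/2}$ with enough blocks in the small regime that the sum over admissible $\bl$ converges to $O(b^{-L})$; the polynomial factor $|\bj|^{d-1}$ arises precisely because, with $\sum|\bj_n|=|\bj|$ fixed, the constraint coupling the $l_n$'s to the $|\bj_n|$'s leaves roughly $|\bj|^{d-1}$ admissible configurations contributing at the top order $b^{-L}$. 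I would organize this by first fixing $\bj$ (hence the $|\bj_n|$), splitting the index set of $\bl$ according to which blocks satisfy $l_n > |\bj_n|$ (call this set $S\subseteq[d]$), observing that $S=\emptyset$ forces $|\bl|\le|\bj|\le L-d$, contradicting $|\bl|\ge L-d+1$, so $S\ne\emptyset$; on $S$ the factors vanish unless we are on the $I_D$ side, and a short computation shows the geometric sum over the remaining $l_n\le|\bj_n|$ (for $n\notin S$), weighted by the binomial coefficients $\binom{d-1}{L-|\bl|}$ which are bounded by $2^{d-1}$, telescopes/bounds to the claimed $C(b,d,s)|\bj|^{d-1}b^{-L}$. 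The remaining calculations (Cauchy--Schwarz, geometric-series summation, counting compositions of $|\bj|$ into $d$ parts) are routine and I would not carry them out in detail here.
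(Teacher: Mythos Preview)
Your overall strategy coincides with the paper's: reduce via Corollary~\ref{DimRed} to the case where all blocks are active, expand using the representation~\eqref{alg_bb_rep}, exploit independence across the $d$ coordinates, and feed in the single-block estimate of Lemma~\ref{ExactBoundBB}. The paper streamlines one step you do not: it applies Cauchy--Schwarz \emph{once at the outset} to reduce immediately to bounding $\Expec\big[(A(L,d)\Psi^{D,\bj}_{\bi,\bk})^2\big]$, avoiding the cross terms in $\bi,\bi'$ altogether.

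However, your ``subtlety'' paragraph contains a genuine error that breaks the argument. You write that $S=\emptyset$ forces $|\bl|\le|\bj|\le L-d$, but the inequality $|\bj|\le L-d$ is nowhere assumed and is in fact the \emph{uninteresting} case: when $|\bj|\le L-d$, Lemma~\ref{Exact2}(2) already gives $(I_D-A(L,d))\Psi^{D,\bj}_{\bi,\bk}=0$ and the bound is trivial. The case that actually needs work is $|\bj|>L-d$, and there your conclusion $S\neq\emptyset$ is exactly backwards. Once all blocks are active one has $I_D\Psi^{D,\bj}_{\bi,\bk}=0$, so there is no ``$I_D$ side'' to fall back on, and for any $n\in S$ (i.e.\ $l_n>|\bj_n|$) the factor $U^{(n)}_{l_n}\Psi^{s,\bj_n}_{\bi_n,\bk_n}$ vanishes by exactness. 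Hence the \emph{only} terms of~\eqref{alg_bb_rep} that survive are those with $S=\emptyset$, i.e.\ $l_n\le|\bj_n|$ for every $n$. This is precisely the set the paper calls $S_{\bj,\mu}$ for $\mu=|\bl|$; its cardinality is at most $\binom{|\bj|-\mu+d-1}{d-1}$, each surviving product contributes $\lesssim b^{-\mu}$ by Lemma~\ref{ExactBoundBB} and independence, and summing over $L-d+1\le\mu\le L$ yields $C|\bj|^{d-1}b^{-L}$. Your intuition that ``roughly $|\bj|^{d-1}$ admissible configurations contribute at the top order $b^{-L}$'' is correct --- it is only the identification of \emph{which} configurations those are that went wrong.
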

\begin{proof}
The case $u(\bj) = \emptyset$ is trivial so without loss of generality we may assume $u(\bj) \neq \emptyset.$ Since $u(\bj) \neq \emptyset$  we have $I_D\Psi^{D, \bj}_{\bi, \bk} = 0$ and so  $$\Expec\left(  \left[   (I_D - A(L,d))   \Psi^{D, \bj}_{\bi, \bk}      \right] \left[ (I_D - A(L,d))\Psi^{D, \bj}_{\bi', \bk}     \right]                  \right) = \Expec \left[ A(L,d) \Psi^{D, \bj}_{\bi, \bk} A(L,d)   \Psi^{D, \bj}_{\bi', \bk}\right].$$
 By the Cauchy-Schwarz inequality
$$\left|\Expec \bigg[ A(L,d) \Psi^{D, \bj}_{\bi, \bk} A(L,d)   \Psi^{D, \bj}_{\bi', \bk}\bigg] \right| \leq \Expec\bigg[(A(L,d)  \Psi^{D, \bj}_{ \bi, \bk})^2  \bigg]^{\frac{1}{2}}\Expec \bigg[(A(L,d)  \Psi^{D, \bj}_{\bi', \bk})^2  \bigg]^{\frac{1}{2}},$$
so it suffices to show $\Expec\bigg[(A(L,d)  \Psi^{D, \bj}_{\bi, \bk})^2\bigg]  \leq C|\bj|^{d-1}b^{-L}. $

We shall now proceed by induction on $(L,d)$ starting with $(L,1)$ for some $L \in \mathbb{N},$ and then moving from $(L-1,d-1)$ to $(L,d).$
For the induction start recall that $A(L,1)$ is just a scrambled $(0,L-1,s)$-net quadrature. In the case $L=1$ we have
$$\Expec \left[ (A(1,1) \Psi^{D, \bj}_{\bi, \bk})^2  \right] = \lVert \Psi^{D, \bj}_{\bi, \bk} \rVert_{L^2}^2 = 1 .$$
Let now $L > 1.$ Due to Lemma \ref{Exact2} the algorithm  $A(L,1)$ is exact on $ \Psi^{s, \bj}_{\bi, \bk}$ for $|\bj| \leq L-1$ . Let now $|\bj| > L-1.$ By Lemma 3 from \cite{HHY03}
$$\Expec\bigg[(A(L,1) \Psi^{s, \bj}_{\bi, \bk})^2\bigg] = N(L,1)^{-1} \Gamma_{ \bj} \leq b^{1-L}, $$
where $N(L,1) = b^{L-1}$ is the number of evaluation points used and $\Gamma_{ \bj}$  is an appropriate gain coefficient which by Lemma $6$ from \cite{HHY03}  is bounded from above by $1.$ This proves the claim for $d=1.$

Let us note that in the induction step we may confine ourselves to wavelets with all the blocks active. Indeed, let $t \in \{1,2,\ldots, d-1\}$ and $\bj$ be such that $\Psi^{D, \bj}_{\bi, \bk}$ admits $d-t$ active blocks. Corollary \ref{DimRed} and our induction hypothesis yield
\begin{align*}
\begin{split}
 &\Expec\bigg[(A(L,d) \Psi^{D, \bj}_{\bi, \bk})^2\bigg] = \Expec\bigg[(A(L-t,d-t)\Psi^{(d-t)s,\bj'}_{\bi', \bk'} )^2\bigg] \leq \\
& C(b,d-t,s)|\bj'|^{d-t-1}b^{-L+t} \leq C(b,d,s)|\bj|^{d-1} b^{-L},
\end{split}
\end{align*}
where $\bj',\bi',\bk'$ have the same meaning as in Corollary \ref{DimRed} and $C(b,d,s) = b^tC(b,d-t,s).$
Now we may go over to the proof of the induction step assuming that we always deal only with functions with all the blocks active.

Note that for many ${\bf l} \in \mathbb{N}_{0}^d$ we have a.s. that $\left(\bigotimes_{n = 1}^d U^{(n)}_{l_n}\right) \Psi^{D, \bj}_{\bi, \bk} = 0,$ and all the vectors ${\bf l}$ for which this is not the case are contained in the set $S_{\bj} = \bigcup_{\mu \geq d}S_{\bj, \mu},$ where $S_{\bj, \mu}$ is defined as
$$S_{\bj, \mu} := \{{\bf l}\in \mathbb{N}^d \, | \, |{\bf l}| = \mu, \forall_{n = 1,\ldots, d} \, l_n \, \leq |\bj_n| \}.$$
A justification of the inclusion is the following: Suppose that for some $t$ we have $l_t > |\bj_t|.$ Integrating every (active) block yields $0,$ thus exactness of $U^{(t)}_{l_t}$ on $\Psi^{s,\bj_t}_{\bi_t, \bk_t}$ gives $U^{(t)}_{l_t}\Psi^{s,\bj_t}_{\bi_t, \bk_t} = 0$.

 Note that
$$|S_{\bj, \mu}| \leq \binom{|{\bf j}| - \mu + d - 1}{d-1};$$
this is because, basically, one asks, in how many ways one can distribute the 'overshoot' of $|{\bf j}| - \mu $ elements between $d$ coordinates.


By Lemma \ref{ExactBoundBB} we have for a constant $C_{b,s}$ not depending on $l$
$$\Expec[(U^{(n)}_l \Psi^{s,\bj}_{\bi,\bk})^2] \leq C_{b,s} b^{-l}.$$
Calling upon the representation (\ref{alg_bb_rep}) and using the independence of the building blocks we finally obtain with a generic constant $C$:
\begin{align*}
\begin{split}
\Expec\bigg[(A(L,d) \Psi^{D,\bj}_{\bi,\bk})^2\bigg]& = \Expec\left[\left( \sum_{L-d+1 \leq|{\bf l}| \leq L} (-1)^{L - |{\bf l}|} \binom{d-1}{L - |{\bf l}|}  \bigotimes_{n = 1}^d U^{(n)}_{l_n} \Psi^{s,\bj_n}_{\bi_n,\bk_n}  \right)^2\right]
\\
&\leq C \sum_{\mu = L-d+1}^L  \sum_{{\bf l} \in S_{\bj, \mu}}\prod_{n = 1}^{d} \Expec \left[\left(U^{(n)}_{l_n} \Psi^{s,\bj_n}_{\bi_n,\bk_n} \right)^2\right]
\\
&\leq C \sum_{{\bf l} \in S_{\bj, L-d+1}} b^{-L} 
\\
&\leq C\binom{|\bj| + 2d - 2 - L}{d-1}b^{-L} \leq C |\bj|^{d-1} b^{-L}
\end{split}
\end{align*}
\end{proof}

Recall the definition of the vector $\Psi^{\alpha}$ from Section \ref{FunctionSpaces}.
For a randomized quadrature $A$ we put
\begin{equation}\label{Lambda}
\Lambda_{A} := \Expec\bigg[((I_D - A)\Psi^{\alpha})((I_D - A)\Psi^{\alpha})^T\bigg],
\end{equation}
where the operator $(I_D - A(\omega))$ is applied to the vector $\Psi^{\alpha}$ componentwise.
It has been shown in \cite[Theorem 1]{HHY03} that if $A$ is a randomized quadrature applied to integrands from $\mathcal{H}^{D}_{\alpha}$, one has
$$e^{\rm{r}}(I_D,A) = \sqrt{\rho(\Lambda_{A})},$$
where $\rho(\Lambda_{A})$ denotes the spectral radius of $\Lambda_{A}$, i.e. the largest absolute value of eigenvalues of $\Lambda_A.$ 
Put
\begin{equation}\label{Lambda2}
 \Lambda := \Lambda_{A(L,d)}.
\end{equation}

\begin{remark}\label{LambdaStructure}
By Lemma \ref{ZeroOutDiag} $\Lambda$ is a block diagonal matrix consisting of $b^{|u(\bj)|} \times b^{|u(\bj)|}$ blocks $\Lambda(\bj, \bk)$ given by
$$\Lambda(\bj, \bk) = \Expec[((I_D - A(L,d)) \Psi^{D,\bj, \alpha}_{\bi, \bk})_{\bi \in \theta_{\bj}} ((I_D - A(L,d)) \Psi^{D,\bj, \alpha}_{\bi, \bk})^T_{\bi \in \theta_{\bj}} ]. $$
For $u(\bj) = \emptyset$ we have $\Lambda(\bj,\bk) = 0 \in \mathbb{R}^{1 \times 1}.$ For $u(\bj)  \neq \emptyset$ we obtain $I_D\Psi^{D,\bj, \alpha}_{\bi, \bk} = 0 $ and consequently
$$\Lambda(\bj, \bk) = \Expec[(A(L,d) \Psi^{D,\bj, \alpha}_{\bi, \bk})_{\bi \in \theta_{\bj}} (A(L,d) \Psi^{D,\bj, \alpha}_{\bi, \bk})^T_{\bi \in \theta_{\bj}} ]. $$
 Due to the block diagonal structure the eigenvalues of $\Lambda$ are exactly the eigenvalues of all the blocks, i.e.
 \begin{align*}
  e^{\rm{r}}(I_D,A(L,d))^2 & =\sup_{\bj, \bk} \rho(\Lambda(\bj, \bk)).
\end{align*}
\end{remark}

\begin{remark}\label{SmolyakCardinality}
Consider $(\widetilde{U}^{(n)}_l), n = 1,\ldots, d,$ and $l \in \mathbb{N},$ where $\widetilde{U}^{(n)}_l$ is a (randomized) QMC quadrature based on a (scrambled) $(0,l,s)$-net. Let $\widetilde{A}(L,d)$ be the corresponding Smolyak method. The building blocks $\widetilde{U}^{(n)}_l$ satisfy the assumption (20) from \cite{GW19} with $\kl = 1, \ku = 2$ and $K = b.$ It follows by \cite[Lemma 8]{GW19} that the cardinality of information used by $\widetilde{A}(L,d)$ is of the order $\Theta(b^LL^{d-1}).$ Now, changing the level of the nets used by the building blocks by $1$ ($U^{(n)}_l$ is based on a scrambled $(0,l-1,s)$-net instead of a $(0,l,s)$-net) influences only the implicit constant, i.e. also the cardinality of information used by $A(L,d)$ is of the order $\Theta(b^LL^{d-1}).$ 
\end{remark}

\begin{theorem}\label{UpperBoundTheorem}
Let $\alpha > \frac{1}{2}, s,d \in \mathbb{N}, D = sd$. For $n = 1,\ldots, d,$ and $l \in \mathbb{N}$ let $U^{(n)}_l$ be a randomized QMC quadrature based on a scrambled $(0,l-1,s)$-net in base $b$. Let $A(L,d)$ denote the corresponding Smolyak method approximating the integral $I_D: \mathcal{H}^{D}_{\alpha} \rightarrow \mathbb{R},$  where the building blocks $(U_l^{(n)})_{l,n}$ are all randomized independently, and denote by $N$  the number of function evaluations performed by $A(L,d)$. Then there exists a constant $C$ independent of $N$ such that the randomized error satisfies

\begin{equation}\label{uppererrorbound}
e^{\rm{r}}(I_D, A(L,d)) \leq C \frac{\log(N)^{(d-1)(1 + \alpha)}}{N^{\alpha + \frac{1}{2}}}.
\end{equation}
\end{theorem}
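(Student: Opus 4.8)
The plan is to bound the randomized error via Remark~\ref{LambdaStructure}, which reduces the problem to estimating $\sup_{\bj,\bk}\rho(\Lambda(\bj,\bk))$, where $\Lambda(\bj,\bk)$ is the $b^{|u(\bj)|}\times b^{|u(\bj)|}$ block built from the scaled wavelets $\Psi^{D,\bj,\alpha}_{\bi,\bk}$. Since $\Lambda(\bj,\bk)$ is a Gram-type matrix of the $b^{|u(\bj)|}$ vectors $(A(L,d)\Psi^{D,\bj,\alpha}_{\bi,\bk})_{\bi}$, its spectral radius is at most its trace (it is positive semidefinite), and hence
\[
\rho(\Lambda(\bj,\bk)) \;\leq\; \sum_{\bi\in\theta_{\bj}} \Expec\bigl[(A(L,d)\Psi^{D,\bj,\alpha}_{\bi,\bk})^2\bigr]
\;=\; b^{-2\alpha|\bj|}\sum_{\bi\in\theta_{\bj}} \Expec\bigl[(A(L,d)\Psi^{D,\bj}_{\bi,\bk})^2\bigr].
\]
The number of summands is $|\theta_{\bj}| = b^{|u(\bj)|}\le b^{D}$, a constant, so by Lemma~\ref{MatEntBound} each term satisfies $\Expec[(A(L,d)\Psi^{D,\bj}_{\bi,\bk})^2]\le C|\bj|^{d-1}b^{-L}$. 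Therefore
\[
e^{\rm r}(I_D,A(L,d))^2 \;=\; \sup_{\bj,\bk}\rho(\Lambda(\bj,\bk))
\;\leq\; C \sup_{\bj}\, b^{-2\alpha|\bj|}|\bj|^{d-1}b^{-L}.
\]
(For $u(\bj)=\emptyset$ the block is $0$, so only $|\bj|\ge 1$ matters.) The key observation is that the Smolyak algorithm is exact on $V^{D,L-d}$ by Lemma~\ref{Exact2}, so wavelets with $|\bj|\le L-d$ contribute nothing; we may restrict the supremum to $|\bj|\ge L-d+1$. On this range the function $m\mapsto b^{-2\alpha m}m^{d-1}$ is eventually decreasing (for $m$ large relative to $d$ and $\alpha$), hence its supremum over $m\ge L-d+1$ is attained near $m = L-d+1$, giving a bound of order $b^{-2\alpha L} L^{d-1} b^{-L}$. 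Collecting the factors,
\[
e^{\rm r}(I_D,A(L,d))^2 \;\leq\; C\, L^{d-1}\, b^{-2\alpha L}\, b^{-L} \;=\; C\, L^{d-1}\, b^{-(2\alpha+1)L}.
\]

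The final step is to translate this level-based bound into the claimed bound in terms of $N$, the number of function evaluations. By Remark~\ref{SmolyakCardinality} we have $N = \Theta(b^{L}L^{d-1})$, hence $b^{L} = \Theta(N/L^{d-1})$ and, taking logarithms, $L = \Theta(\log N)$ (the $L^{d-1}$ correction is only logarithmic and absorbed). Substituting $b^{-(2\alpha+1)L} = \Theta\bigl(L^{(d-1)(2\alpha+1)} N^{-(2\alpha+1)}\bigr)$ into the bound on $e^{\rm r}(I_D,A(L,d))^2$ yields
\[
e^{\rm r}(I_D,A(L,d))^2 \;\leq\; C\, L^{d-1}\, L^{(d-1)(2\alpha+1)}\, N^{-(2\alpha+1)}
\;=\; C\, L^{(d-1)(2\alpha+2)}\, N^{-(2\alpha+1)}
\;=\; C\, (\log N)^{2(d-1)(1+\alpha)}\, N^{-(2\alpha+1)},
\]
and taking square roots gives exactly \eqref{uppererrorbound}.

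I expect the main obstacle to be the bookkeeping in the passage from $L$ to $N$: one must verify that $L = \Theta(\log N)$ with the implicit constants controlled (so that $L^{d-1}$ and all the polynomial-in-$L$ prefactors really do contribute only powers of $\log N$), and one must be careful that the exponent arithmetic lines up to produce precisely $(d-1)(1+\alpha)$ in the exponent of $\log N$ rather than something off by a constant factor. The analytic heart of the argument — reducing to the trace of $\Lambda(\bj,\bk)$, invoking exactness on $V^{D,L-d}$ to restrict to $|\bj|\ge L-d+1$, and applying Lemma~\ref{MatEntBound} — is essentially routine given the lemmas already established; the one place demanding a little care is checking that the supremum of $b^{-2\alpha m}m^{d-1}$ over $m\ge L-d+1$ is indeed of order $b^{-2\alpha L}L^{d-1}$, which follows since for $\alpha>\tfrac12$ the exponential decay dominates the polynomial growth once $m$ exceeds a fixed threshold depending only on $d$ and $\alpha$, and $L-d+1$ exceeds that threshold for all large $L$ (smaller $L$ being handled by adjusting the constant $C$).
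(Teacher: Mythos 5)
Your proof is correct and follows essentially the same route as the paper's: the block-diagonal structure of $\Lambda$ from Remark~\ref{LambdaStructure}, Lemma~\ref{MatEntBound} for the block entries, exactness on $V^{D,L-d}$ (Lemma~\ref{Exact2}) to restrict to $|\bj|>L-d$, the elementary optimization of $m\mapsto b^{-2\alpha m}m^{d-1}$, and the cardinality estimate $N=\Theta(b^LL^{d-1})$. The only, immaterial, difference is that you bound $\rho(\Lambda(\bj,\bk))$ by the trace of the positive semidefinite block, whereas the paper uses Gershgorin circles; both give the same factor $b^{|u(\bj)|}\le b^{D}$.
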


\begin{remark}
Note that using the optimal bounds for errors of quadratures based on scrambled $(0,m,s)-$nets from \cite{HHY03} and then applying the general Theorem 9 from \cite{GW19} yields a weaker asymptotic bound than \eqref{uppererrorbound} with exponent $(d-1)(\frac{3}{2} + \alpha)$ in the logarithmic term.
\end{remark}

\begin{corollary}\label{Cor:Error_Sobolev}
Let $1/2 < \alpha < 1$. Since the Sobolev space of dominated mixed smoothness 
$H^{\alpha}_{\rm mix} ( [0,1]^D)$ is continuously embedded in $\mathcal{H}_{\alpha}^D $ (cf. Remark~\ref{Rem:Sobolev}),
Theorem \ref{UpperBoundTheorem} still holds if we replace the space of integrands $\mathcal{H}_{\alpha}^D $ by $H^{\alpha}_{\rm mix} ( [0,1]^D)$ and measure the randomized error in the Sobolev norm instead of the Haar wavelet norm.

That is, the upper error bound \eqref{uppererrorbound} for the randomized Smolyak method holds also for multivariate integration over the Sobolev space of mixed dominated smoothness $H^{\alpha}_{\rm mix} ( [0,1]^D)$.
\end{corollary}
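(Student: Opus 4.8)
The plan is to derive the corollary purely from the continuous embedding $H^{\alpha}_{\rm mix}([0,1]^D) \hookrightarrow \mathcal{H}_{\alpha}^D$ recorded in Remark~\ref{Rem:Sobolev}, treating Theorem~\ref{UpperBoundTheorem} as a black box. The essential point is that the randomized error is defined as a supremum over the unit ball of the input space, so shrinking (or continuously embedding) the input space can only shrink this supremum, up to the norm of the embedding. I do not need to re-examine the scrambled-net machinery at all; the entire content is a monotonicity-of-the-worst-case-error argument combined with the comparison of the two norms.

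Concretely, I would first fix the embedding constant. By Remark~\ref{Rem:Sobolev}, there is a constant $c_{\rm emb} > 0$ such that $\lVert f \rVert_{\mathcal{H}_{\alpha}^D} \leq c_{\rm emb} \lVert f \rVert_{H^{\alpha}_{\rm mix}}$ for all $f \in H^{\alpha}_{\rm mix}([0,1]^D)$, and in particular $H^{\alpha}_{\rm mix}([0,1]^D) \subseteq \mathcal{H}_{\alpha}^D$ as sets, so the Smolyak algorithm $A(L,d)$ is well defined on these integrands. Next I would observe that for a fixed realization $\omega$ the algorithm $A(L,d)(\omega)$ acts by the same quadrature formula regardless of which function space norm we use to measure errors; only the supremum defining $e^{\rm r}$ changes. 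Let $e^{\rm r}_{\mathcal{H}}$ and $e^{\rm r}_{H}$ denote the randomized errors measured with respect to the Haar wavelet ball and the Sobolev ball, respectively. For any $f$ with $\lVert f \rVert_{H^{\alpha}_{\rm mix}} \leq 1$ we have $\lVert f \rVert_{\mathcal{H}_{\alpha}^D} \leq c_{\rm emb}$, hence $g := f / c_{\rm emb}$ satisfies $\lVert g \rVert_{\mathcal{H}_{\alpha}^D} \leq 1$; by linearity of $I_D - A(L,d)$ this yields the pointwise-in-$f$ bound
\[
\left( \Expec \big[ |A(L,d)f - I_D f|^2 \big] \right)^{1/2} = c_{\rm emb} \left( \Expec \big[ |A(L,d)g - I_D g|^2 \big] \right)^{1/2} \leq c_{\rm emb}\, e^{\rm r}_{\mathcal{H}}(I_D, A(L,d)).
\]
Taking the supremum over the Sobolev unit ball gives $e^{\rm r}_{H}(I_D, A(L,d)) \leq c_{\rm emb}\, e^{\rm r}_{\mathcal{H}}(I_D, A(L,d))$. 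Finally, inserting the upper bound \eqref{uppererrorbound} from Theorem~\ref{UpperBoundTheorem} and absorbing $c_{\rm emb}$ into the constant $C$ delivers exactly the claimed bound for the Sobolev space, with the same $N$-dependence $\log(N)^{(d-1)(1+\alpha)} / N^{\alpha + 1/2}$.

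There is essentially no obstacle here in the analytic sense; the only point requiring a modicum of care is bookkeeping the two distinct meanings of $e^{\rm r}$ (one supremum over each ball) so that the inequality between them is stated cleanly, and confirming that the cardinality $N$ of information used is the same object in both settings since the algorithm $A(L,d)$ itself is unchanged. The restriction $1/2 < \alpha < 1$ enters only through the validity of the embedding in Remark~\ref{Rem:Sobolev} and plays no further role. I would keep the write-up to a few lines, since the substance is entirely the one-sided comparison of worst-case errors induced by the embedding.
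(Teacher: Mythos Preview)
Your proposal is correct and follows exactly the approach the paper takes: the corollary in the paper is stated without a separate proof, since the justification is already contained in its opening clause (the continuous embedding of Remark~\ref{Rem:Sobolev} combined with Theorem~\ref{UpperBoundTheorem}). You have simply spelled out the standard monotonicity-of-the-worst-case-error argument that the paper leaves implicit.
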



\begin{proof}[Proof of Theorem \ref{UpperBoundTheorem}]
By Lemma \ref{MatEntBound} the entries of $\Lambda(\bj, \bk)$ are of the order $b^{-2\alpha |\bj|} |\bj|^{d-1} b^{-L}$. Due to the exactness result from  Lemma \ref{Exact2}  we only have to consider blocks for which $|\bj| > L - d \approx L$, since for other $\bj$ the blocks $\Lambda(\bj, \bk)$ consist just of zeros.
  For any square matrix $D = (d_{i,j})_{i,j = 1}^n$ all the eigenvalues  lie inside $\bigcup_{i = 1}^{n}K_i$, where $$K_i = \{z \in \mathbb{C} \, | \, |z - d_{i,i}| \leq \sum_{j \neq i} |d_{i,j}|\}$$
  are Gershgorin circles.
Due to Remark \ref{LambdaStructure} we have, for some suitable constants $C$ and $\tilde{C},$
\begin{align*}
e^{\rm{r}}(I_D,A(L,d))^2 & =\sup_{\bj, \bk} \rho(\Lambda(\bj, \bk))
 \\
  &\leq \sup_{\bj : |\bj| > L-d} C b^{|u(\bj)|} b^{- 2 \alpha |\bj|}|\bj|^{d-1}b^{-L}
  \\
    &\leq \sup_{\bj: |\bj| > L-d} C b^{D} b^{- 2 \alpha |\bj|}|\bj|^{d-1}b^{-L}
  \\
   &\leq \tilde{C} b^{-L(1+2\alpha)} L^{d-1}.
\end{align*}
To justify the last inequality it is enough to show that for some $\tilde{C}$ not depending on $L$ we have
\begin{equation}\label{boundJust}
\sup_{\bj : |\bj| > L-d} b^{2\alpha (L-|\bj|)} \left(\tfrac{|\bj|}{L}\right)^{d-1} \leq \tilde{C}.
\end{equation}
To this end consider $f: \mathbb{R}_{>0} \rightarrow \mathbb{R}, x \mapsto b^{2 \alpha(L - x)} \left(\tfrac{x}{L}\right)^{d-1}.$
It holds
$$f'(x) = b^{2 \alpha(L - x)} \left( \tfrac{x}{L} \right)^{d-2} \left[ -2\alpha  \log(b)\tfrac{x}{L} + \tfrac{d-1}{L}\right],$$
and so $f$ attains its (sole) local maximum at $x_0 = \tfrac{d-1}{2 \alpha \log(b)}.$ 
Therefore
\begin{align*}
& \sup_{\bj : |\bj| > L-d} b^{2\alpha (L-|\bj|)} \left(\tfrac{|\bj|}{L}\right)^{d-1} \leq b^{2 \alpha d} \left( \max \left\{\tfrac{d-1}{2 \alpha \log(b) L}, \frac{L-d}{L} \right\}   \right)^{d-1}
\\
& \leq b^{2 \alpha d} \left( \tfrac{1}{2\alpha \log(b)}\right)^{d-1} \leq (2b)^{2 \alpha d},
\end{align*}
which proves the desired inequality. 

Remark \ref{SmolyakCardinality} yields
$$N = \Theta(b^L L^{d-1}).$$
It follows
$$b^{-L(1 + 2\alpha)}L^{d-1} = \frac{L^{(d-1)(2 + 2\alpha)}}{b^{L(1 + 2\alpha)} L^{(d-1)(1+2\alpha)}} = \mathcal{O} \bigg( \bigg( \frac{(\log(N))^{(d-1)(1 + \alpha)}}{N^{\alpha + \frac{1}{2}}}  \bigg)^2 \bigg).$$
Hence we have
$$e^{\rm{r}}(I_D,A(L,d)) \leq C \frac{(\log(N))^{(d-1)(1 + \alpha)}}{N^{\alpha + \frac{1}{2}}}.$$
\end{proof}

\subsection{Lower Bounds on the Integration Error}\label{LB}

\begin{lemma}\label{ArrangementsLowerBound}
 Let $d, L \in \mathbb{N}, \bj = \left( \lfloor \frac{2L}{d} \rfloor, \lfloor \frac{2L}{d} \rfloor, \ldots, \lfloor \frac{2L}{d} \rfloor, 2L - (d-1)\lfloor \frac{2L}{d} \rfloor \right) \in \mathbb{N}^d$ and $$S_{\bj, L} =  \{{\bf l}\in \mathbb{N}^d \, | \, |{\bf l}| = L, \forall_{n = 1,\ldots, d} \, l_n \, \leq |\bj_n| \}.$$
 One has
 $$|S_{\bj, L}| = \Omega(L^{d-1}),$$
 where the implicit constant may depend on $d.$
\end{lemma}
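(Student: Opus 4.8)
The plan is to exhibit an explicit family of $\Omega(L^{d-1})$ tuples lying in $S_{\bj,L}$. The case $d=1$ is trivial (then $\bj=(2L)$ and $S_{\bj,L}=\{(L)\}$), so I assume $d\ge 2$; since the asserted bound is asymptotic with a $d$-dependent implied constant, I may also assume $L$ is large in terms of $d$, say $L\ge d^2$. Write $M:=\lfloor 2L/d\rfloor$, so that $\bj=(M,\dots,M,\bj_d)$ with $\bj_d:=2L-(d-1)M$ and $|\bj|=2L$; from $2L/d-1<M\le 2L/d$ we get $2L/d\le\bj_d\le 2L/d+(d-1)$. The idea is that a point $\bl$ with $|\bl|=L$ has coordinates of average size $L/d$, which lies comfortably above $1$ and below every cap $\bj_n\approx 2L/d$, so freezing $d-1$ of the coordinates anywhere in a window of width $\Theta(L)$ around $L/d$ leaves the last coordinate automatically admissible.

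Concretely, set $r:=\lfloor L/d^2\rfloor$ and let $(l_1,\dots,l_{d-1})$ run over $\{\lfloor L/d\rfloor,\lfloor L/d\rfloor+1,\dots,\lfloor L/d\rfloor+r\}^{d-1}$, with $l_d:=L-\sum_{n=1}^{d-1}l_n$. Distinct tuples $(l_1,\dots,l_{d-1})$ give distinct $\bl=(l_1,\dots,l_d)\in\mathbb{Z}^d$, all satisfying $|\bl|=L$, so there are $(r+1)^{d-1}$ of them. I would then check $\bl\in S_{\bj,L}$, i.e.\ $1\le l_n\le\bj_n$ for all $n$: for $n<d$ one has $l_n\ge\lfloor L/d\rfloor\ge 1$ and $l_n\le\lfloor L/d\rfloor+\lfloor L/d^2\rfloor\le L/d+L/d^2\le 2L/d$, hence $l_n\le\lfloor 2L/d\rfloor=M=\bj_n$; for $n=d$, $l_d=L-\sum_{n<d}l_n\ge L-(d-1)(L/d+L/d^2)=L/d^2>0$, so $l_d\ge 1$, while $l_d\le L-(d-1)\lfloor L/d\rfloor\le L/d+(d-1)^2/d\le 2L/d\le\bj_d$ using $L\ge(d-1)^2$. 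Therefore, for $L\ge d^2$,
$$|S_{\bj,L}|\ \ge\ (r+1)^{d-1}\ \ge\ \Bigl(\tfrac{L}{d^2}\Bigr)^{d-1}\ =\ \Omega\bigl(L^{d-1}\bigr),$$
with implied constant depending only on $d$.

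The only point requiring care is keeping all four inequalities above valid at once: widening the window $r$ enlarges the count $(r+1)^{d-1}$ but pushes $l_d$ toward its extreme admissible values, so the width $r=\lfloor L/d^2\rfloor$ and the centering at $\lfloor L/d\rfloor$ must be chosen to leave slack on every side simultaneously. This bookkeeping is the main (and fairly minor) obstacle; apart from it the proof uses only elementary estimates on floor functions and no further input.
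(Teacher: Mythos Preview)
Your proof is correct and follows essentially the same strategy as the paper: freely choose the first $d-1$ coordinates in an interval of width $\Theta(L)$ (centered so that both the coordinate caps $l_n\le\bj_n$ and the residual $l_d$ stay in range), then count $(r+1)^{d-1}=\Omega(L^{d-1})$ tuples. The only cosmetic difference is that the paper first reduces to the case $d\mid 2L$ and uses the window $\bigl[\tfrac{(d-2)L}{d(d-1)}+1,\,\tfrac{L}{d-1}-1\bigr]$, whereas you work directly with the floors and the window $[\lfloor L/d\rfloor,\lfloor L/d\rfloor+\lfloor L/d^2\rfloor]$.
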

\begin{proof}
The case $d = 1$ is trivial, so let us assume $d \geq 2.$
 For the ease of presentation we shall prove only the case when $d$ divides $2L.$ After obvious changes the proof goes through in the general case. So let $\bj = \left( \frac{2L}{d}, \ldots, \frac{2L}{d}\right).$ Moreover, since we are interested in asymptotic statement for $L,$ we may without loss of generality assume that $L \geq 2d(d-1)$. One may describe the situation in the following way: we have $d$ bins numbered $1,2,\ldots, d$, each of them with capacity $ \frac{2L}{d}.$ How many ways are there to arrange $L$  indistinguishable balls in those bins? Let us focus on the bins numbered $1, \ldots,d-1$. Any of them may have any number of balls between $\frac{(d-2)L}{d(d-1)} + 1$ and $\frac{dL}{d(d-1)} - 1$ independently of all the other bins  numbered $1,\ldots, d-1.$ Indeed, if every of those bins has $\frac{(d-2)L}{d(d-1)} + 1$ balls then putting in the $d$-th bin $\frac{2L}{d} - (d-1)$ balls we end up with $L$ balls altogether. On the other hand, if every of the first $d-1$ bins has $\frac{dL}{d(d-1)}-1$ balls then putting $d-1$ balls in the $d$-th bin we also end up with $L$ balls altogether. That is,
 $$|S_{\bj, L}| \geq \left( \frac{dL - (d-2)L}{d(d-1)} - 1 \right)^{d-1} = \left( \frac{2L}{d(d-1)} - 1    \right)^{d-1} = \Omega(L^{d-1}). $$
\end{proof}

\begin{theorem}\label{Thm:LowerBound}
Let $\alpha > \frac{1}{2}, d,s \in \mathbb{N}, D = sd$. For $n = 1,\ldots d,$ and $l \in \mathbb{N},$ let $U^{(n)}_l$ be independently randomized  RQMC quadratures based on scrambled $(0,l-1,s)$-nets. Let $A(L,d)$ denote the corresponding Smolyak method approximating the integral $I_D: \mathcal{H}^{D}_{\alpha} \rightarrow \mathbb{R}.$ Then there exists a constant $c_{b,s}$ independent of $N$ such that the randomized error satisfies
$$e^{\rm{r}}(I_D,A(L,d)) \geq c_{s,b} \frac{(\log(N))^{(d-1)(1 + \alpha)}}{N^{\alpha + \frac{1}{2}}}.$$
\end{theorem}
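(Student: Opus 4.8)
The plan is to produce a single explicit wavelet (or a family indexed by $L$) on which the randomized error is large, and to use the variational lower bound together with the spectral characterization from \cite{HHY03} recalled in Remark~\ref{LambdaStructure}. Concretely, recall that $e^{\rm{r}}(I_D,A(L,d))^2 = \sup_{\bj,\bk}\rho(\Lambda(\bj,\bk))$, so it suffices to exhibit one choice of $\bj$ (all blocks active, so that $I_D\Psi^{D,\bj}_{\bi,\bk}=0$) and one index $\bi$ for which $\Expec[(A(L,d)\Psi^{D,\bj,\alpha}_{\bi,\bk})^2]$ is bounded below by a constant times $b^{-L(1+2\alpha)}L^{d-1}$; this diagonal entry of $\Lambda(\bj,\bk)$ is a lower bound for its spectral radius (the matrix is positive semidefinite). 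Translating through $N = \Theta(b^L L^{d-1})$ exactly as in the proof of Theorem~\ref{UpperBoundTheorem} then yields the claimed rate $c_{s,b}(\log N)^{(d-1)(1+\alpha)}/N^{\alpha+1/2}$.

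The natural candidate is the ``balanced'' multi-index from Lemma~\ref{ArrangementsLowerBound}: take $\bj$ so that each of the $d$ blocks carries an $s$-dimensional wavelet of resolution $|\bj_n|\approx 2L/d$, with every block active, so $|\bj|=2L$. Using representation~(\ref{alg_bb_rep}), $A(L,d)\Psi^{D,\bj}_{\bi,\bk} = \sum_{L-d+1\le|\bl|\le L}(-1)^{L-|\bl|}\binom{d-1}{L-|\bl|}\bigotimes_{n=1}^d U^{(n)}_{l_n}\Psi^{s,\bj_n}_{\bi_n,\bk_n}$. The point of the balanced choice is that for every $\bl\in S_{\bj,L}$ (the set from Lemma~\ref{ArrangementsLowerBound}, of cardinality $\Omega(L^{d-1})$) one has $l_n\le|\bj_n|$ for all $n$, so none of the factors is forced to vanish by exactness. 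Expanding the square and taking expectations, the cross terms between distinct $\bl,\bl'$ must be controlled; here I would use the tensor/product structure of the randomization together with Lemma~\ref{ZeroOutDiag}-type orthogonality of scrambled-net estimates applied coordinatewise, so that only ``matching'' contributions survive and the expectation reduces to a positive sum $\sum_{\bl\in S_{\bj,L}}\prod_{n=1}^d\Expec[(U^{(n)}_{l_n}\Psi^{s,\bj_n}_{\bi_n,\bk_n})^2]$, possibly after discarding a controlled remainder. Each factor is bounded below by $c_{b,s}b^{-l_n}$ via Lemma~\ref{ExactBoundBB} (applicable since $|\bj_n|\approx 2L/d \ge l_n+s-1$ for $L$ large), so each summand is $\gtrsim b^{-|\bl|}\ge b^{-L}$ (times the scaling factor $b^{-2\alpha|\bj|}=b^{-4\alpha L}$ — note: one should instead scale so the total resolution is $\approx L$, not $2L$; see below). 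Summing over $S_{\bj,L}$ and invoking Lemma~\ref{ArrangementsLowerBound} gives $\Omega(L^{d-1}b^{-L})$ for the unscaled second moment, hence $\Omega(L^{d-1}b^{-L(1+2\alpha)})$ after the $b^{-2\alpha|\bj|}$ scaling once $|\bj|$ is chosen of order $L$ (split $L$ into $d$ blocks of size $\approx L/d$, each active, and apply Lemma~\ref{ArrangementsLowerBound} with $L$ in place of $2L$; the combinatorics are identical).

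The main obstacle I anticipate is the control of the off-diagonal (cross) terms $\Expec[(\bigotimes_n U^{(n)}_{l_n}\Psi^{s,\bj_n}_{\bi_n,\bk_n})(\bigotimes_n U^{(n)}_{l'_n}\Psi^{s,\bj_n}_{\bi_n,\bk_n})]$ for $\bl\neq\bl'$ in the expansion of the square: unlike the upper-bound proof, where Cauchy--Schwarz and a crude triangle inequality suffice, for a lower bound we cannot afford cancellation to destroy the main term. The clean way around this is to pick $\bi$ (the shape parameter) cleverly — for scrambled $(0,m,s)$-nets the covariance $\Expec[U_l\Psi^{s,\bj}_{\bi,\bk}\,U_{l'}\Psi^{s,\bj'}_{\bi',\bk'}]$ has a known sign/structure (it is governed by the gain coefficients of \cite{HHY03} and vanishes unless the supports are nested), so for a fixed wavelet and varying levels $l\le l'$ one should be able to show the relevant covariances are all nonnegative, making every term in the expanded square a positive contribution; then even keeping only the diagonal $\bl=\bl'$ terms over $S_{\bj,L}$ gives the bound. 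Alternatively, one restricts attention to those $\bl$ with $|\bl|=L$ exactly (for which the binomial coefficient is $1$ and there is no sign), shows those cross terms among themselves are handled by coordinatewise orthogonality of distinct-level scrambled estimates when the indices differ, and absorbs the rest. Either route reduces the theorem to the combinatorial count already supplied by Lemma~\ref{ArrangementsLowerBound} and the per-block lower bound of Lemma~\ref{ExactBoundBB}, after which the passage to $N$ is routine.
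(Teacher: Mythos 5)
Your skeleton is the same as the paper's: reduce to a single diagonal entry of one block $\Lambda(\bj,\bk)$ via Remark~\ref{LambdaStructure}, pick a balanced $\bj$, expand $A(L,d)\Psi^{D,\bj}_{\bi,\bk}$ through the representation \eqref{alg_bb_rep}, count the surviving $\bl$ with Lemma~\ref{ArrangementsLowerBound}, and bound each factor from below with Lemma~\ref{ExactBoundBB}. One point you worry about is easier than you think: the cross terms for $\bl\neq\bl'$ vanish exactly, with no gain-coefficient or sign analysis. Indeed, there is a coordinate $n$ with $l_n\neq l'_n$; independence of the building blocks across $n$ and across levels factorizes the expectation over $n$, and at that coordinate the factor is $\Expec[U^{(n)}_{l_n}\Psi^{s,\bj_n}_{\bi_n,\bk_n}]\cdot\Expec[U^{(n)}_{l'_n}\Psi^{s,\bj_n}_{\bi_n,\bk_n}]=0$, since the block is active and the quadratures are unbiased. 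The second moment is therefore exactly $\sum_{\bl}c_{\bl}^2\prod_{n}\Expec[(U^{(n)}_{l_n}\Psi^{s,\bj_n}_{\bi_n,\bk_n})^2]$, which is what the paper uses.

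The genuine gap is in your proposed repair of the normalization. The claim that one can ``scale so the total resolution is $\approx L$, not $2L$'' with ``identical combinatorics'' is false: if $|\bj|=L$ with balanced blocks $|\bj_n|\approx L/d$, then any $\bl\in\N^d$ with $|\bl|=L$ and $l_n\le|\bj_n|$ must have $l_n=|\bj_n|$ for every $n$, so $S_{\bj,L}$ is a singleton and $\sum_{\mu=L-d+1}^{L}|S_{\bj,\mu}|=O_d(1)$. Lemma~\ref{ArrangementsLowerBound} produces $\Omega(L^{d-1})$ arrangements only because the total capacity $|\bj|=2L$ exceeds the number of balls $L$ by a margin proportional to $L$. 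With $|\bj|\approx L$ you therefore only obtain $\Expec[(A\Psi^{D,\bj}_{\bi,\bk})^2]\gtrsim b^{-L}$, hence a diagonal entry $\gtrsim b^{-L(1+2\alpha)}$ and, after $N=\Theta(b^LL^{d-1})$, the weaker bound $(\log N)^{(d-1)(1/2+\alpha)}N^{-1/2-\alpha}$, which misses the claim by a factor $(\log N)^{(d-1)/2}$. You have, however, put your finger on a real tension rather than a bookkeeping slip: the paper keeps $|\bj|=2L$, for which the count is indeed $\Omega(L^{d-1})$, but then the diagonal entry of $\Lambda(\bj,\bk)$ carries the scaling $b^{-2\alpha|\bj|}=b^{-4\alpha L}$, and the paper's write-up does not reconcile this with the target $c\,b^{-L(1+2\alpha)}L^{d-1}$. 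Writing $|\bj|=L+\delta$, the diagonal entry is of order $\delta^{d-1}b^{-2\alpha\delta}\,b^{-L(1+2\alpha)}$, whose supremum over $\delta$ is attained at bounded $\delta$ and is $O(b^{-L(1+2\alpha)})$; so no choice of a single wavelet test function delivers the extra $L^{d-1}$ through this diagonal-entry argument. Closing the theorem requires new input beyond choosing between $|\bj|\approx L$ and $|\bj|=2L$, and your proposal does not supply it.
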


\begin{proof}
Recall the definition of $\Lambda$ from (\ref{Lambda2}). Remark \ref{SmolyakCardinality} gives $N = \Omega(b^{L}L^{d-1}).$ Due to Remark \ref{LambdaStructure} it suffices to show that
\begin{equation}\label{LBcondition}
\sup_{\bj, \bk} \rho(\Lambda(\bj,\bk)) \geq c b^{-L(1+2 \alpha)}L^{d-1},
\end{equation}
 with a constant $c > 0$ not depending on $L$, where $\Lambda(\bj, \bk)$ is the block of $\Lambda$ corresponding to $\Psi^{D, \bj}_{\bi, \bk}, \bi \in \theta_{\bj}.$

To ascertain that $\sup_{\bj, \bk}\rho(\Lambda(\bj, \bk)) \geq c b^{-L(1+ 2\alpha)} L^{d-1},$ it is enough to show that for some $\tilde{c} > 0$ independent of $L$ and some $\bj, \bk,$ some diagonal entry of $\Lambda(\bj, \bk)$ is bounded from below by $\tilde{c} b^{-L(1+ 2\alpha)} L^{d-1}.$ Indeed, if this holds then the same bound holds also for the trace of $\Lambda(\bj, \bk)$ and for $\rho(\Lambda(\bj, \bk)).$ That is, we only need to show that for an appropriate $\bj$ with $|\bj|$ proportional to $L$ and some admissible $\bi, \bk$ we have
$$\Expec\left[ \left(  A(L,d) \Psi^{D, \bj}_{\bi, \bk}  \right)^2   \right] \geq \tilde{c} b^{-L}L^{d-1}.$$ 

 We take $ \bj = \left( \lfloor \frac{2L}{d} \rfloor, \lfloor \frac{2L}{d} \rfloor, \ldots, \lfloor \frac{2L}{d} \rfloor, 2L - (d-1)\lfloor \frac{2L}{d} \rfloor \right).$ Recall from Lemma \ref{ArrangementsLowerBound} the definition of $S_{\bj, L}$
and that $|S_{\bj, L}| = \Omega(L^{d-1}).$
Hence, we obtain for $L$ large enough
\begin{align*}
& \Expec\left[ \left( A(L,d) \Psi^{D, \bj}_{\bi, \bk}  \right)^2   \right] =  \Expec\left[\left( \sum_{L-d+1 \leq|{\bf l}| \leq L} (-1)^{L - |{\bf l}|} \binom{d-1}{L - |{\bf l}|}  \bigotimes_{n = 1}^d U^{(n)}_{l_n} \Psi^{s,\bj_n}_{\bi_n,\bk_n}  \right)^2\right]
\\
& = \Expec \left[ \sum_{\mu = L - d +1}^{L}  \sum_{{\bf l} \in S_{\bj, \mu}} \binom{d-1}{L- \mu}^2 \left(\bigotimes_{n = 1}^d U^{(n)}_{l_n} \Psi^{s, \bj_n}_{\bi_n, \bk_n}    \right)^2 \right]
\\
& \geq C \sum_{\mu = L - d +1}^{L} |S_{\bj, \mu}|b^{-\mu} \geq C|S_{\bj, L}| b^{-L} \geq C L^{d-1}b^{-L},
\end{align*}
where the first inequality follows by Lemma \ref{ExactBoundBB} and the second inequality by Lemma \ref{ArrangementsLowerBound}.
\end{proof}

\bibliographystyle{siam}
\bibliography{References}

\end{document}